\newcommand{\de}{ \textrm{d} }
\newtheorem{theorem}{Theorem}
\newtheorem{proposition}[theorem]{Proposition}
\newtheorem{hypothesis}[theorem]{Hypothesis}
\theoremstyle{definition}
\newtheorem{example}[theorem]{Example}
\newtheorem{algorithm}[theorem]{Algorithm}
\newcommand\numberthis{\addtocounter{equation}{1}\tag{\theequation}}
\newtheorem{corollary}[theorem]{Corollary}
\newtheorem{remark}[theorem]{Remark}
\newtheorem{definition}[theorem]{Definition}
\newtheorem*{proposition*}{Proposition}
\def\ra{\rightarrow}
\def\txtd{{\textnormal{d}}}
\def\R{\mathbb{R}}
\def\N{\mathbb{N}}
\def\1{\mathds{1}}
\def\cK{\mathcal{K}}
\def\cM{\mathcal{M}}
\def\cO{\mathcal{O}}
\def\dist{\operatorname{dist}}
\newcommand{\fa} {\quad \text{for all }\,}
\begin{document}

\title{Early-warning signals for bifurcations in \\ random dynamical systems with bounded noise}
\author{Christian Kuehn\footnote{Technical University of Munich, Faculty of Mathematics, Boltzmannstr. 3, 85748 Garching bei M{\"u}nchen, Germany}, Giuseppe Malavolta\footnote{Department of Mathematics, Imperial College London, 180 Queen's Gate, London SW7 2AZ, United Kingdom}, and Martin Rasmussen\footnote{Department of Mathematics, Imperial College London, 180 Queen's Gate, London SW7 2AZ, United Kingdom}}

\maketitle

\begin{abstract}
  We consider discrete-time one-dimensional random dynamical systems
  with bounded noise, which generate an associated set-valued dynamical system. We provide
  necessary and sufficient conditions for a discontinuous bifurcation of a
  minimal invariant set of the set-valued dynamical system in terms
  of the derivatives of the so-called extremal maps. We propose an algorithm for reconstructing the derivatives of the
  extremal maps from a time series that is generated by iterations of the original
  random dynamical system. We demonstrate that the derivative reconstructed for different parameters
  can be used as an early-warning signal to detect an upcoming bifurcation, and
  apply the algorithm to the bifurcation analysis of the stochastic return map
  of the Koper model, which is a three-dimensional multiple time scale
  ordinary differential equation used as prototypical model for the formation of
  mixed-mode oscillation patterns. We apply our algorithm to data
  generated by this map to detect an upcoming transition.
\end{abstract}

\textbf{Keywords:} Bifurcation, bounded noise, early-warning signal, fast-slow system, Koper model, mixed-mode oscillations, random dynamical system, set-valued dynamical system.

\textbf{2010 Mathematics Subject Classification:} 37G35, 37H20, 37C70, 49K21, 70K70

\section{Introduction}

There has been a steadily increasing interest into dealing with sudden changes in the behaviour of dynamical systems, sometimes referred to as \emph{critical transitions} and \emph{tipping points} in the applied sciences. Many critical transitions can be modelled mathematically using differential equations involving \emph{bifurcations}. For instance, medical conditions such as asthma attacks and epileptic seizures can change quickly from regular to irregular behaviour, the financial markets are known to suddenly break trends in a crisis, and climate conditions and ecological environments can change rather abruptly.

The understanding of the dynamical behaviour near critical transitions is very important, since it enables human interaction to attenuate or control the consequences of critical transitions, but predicting critical transitions before they are reached is notably hard. Recent results by scientists working in different fields of applications suggest the existence of generic early-warning signals (such as autocorrelation and variance) that could indicate for a large class of systems if a critical transition is being approached \cite{Schefferetal,KuehnCT2}.

In low-dimensional autonomous (time-independent) dynamical systems, critical transitions have been extensively studied in the context of bifurcation theory, by explaining and classifying ways in which attractors can lose stability and give rise to new types of behaviours, from simple transitions between stationarity and oscillations, to transitions between predictable and intrinsically unpredictable (chaotic) dynamics \cite{Chow_96_1,Kuznetsow_95_1}. There is a central assumption of very slow (adiabatic) variation of parameters in the classical bifurcation theory, and the problem of critical transitions in complex systems and their early-warning signals motivates the need to deal with transitions in nonautonomous (time-dependent) and random dynamical systems \cite{Kloeden_11_2,ArnoldSDE}. Critical transitions in form of rate-induced tipping in nonautonomous dynamical systems have recently been studied in \cite{Ashwin_17_1,Ritchie_16_1}

In this paper, we consider discrete-time random dynamical systems with bounded noise, which give rise to a set-valued dynamical system. We study bifurcations of the set-valued dynamical system in form of discontinuous changes in the minimal invariant sets (which correspond to critical transitions), and we address the question of an early warning to predict such transitions. We provide an algorithm that approximates the derivatives of the extremal mappings of one-dimensional set-valued dynamical systems from a time series generated by the corresponding random dynamical system with bounded noise. The use of derivatives of extremal maps as early-warning signals has not been established before. We show that the derivative is equal to $1$ at a bifurcation point necessarily, and provides a universal threshold for this reason, which does not depend on the type of system or the type of bounded noise.

We apply our theoretical results to the return map in the Koper model \cite{Koper}. This model is a prototypical example exhibiting mixed-mode oscillations (MMOs) \cite{Desrochesetal}. It is a system of ordinary differential equations  with one fast and two slow variables. Formally, a generic return map to a codimension-one section would be a two-dimensional map but the strong contraction in one direction implies that the system has an effectively one-dimensional return map \cite{Guckenheimer8}. We study a bounded noise perturbation of the Koper model leading to a random return map. In particular, we are interested in the detection of bifurcations that induce a change in the MMO pattern. We demonstrate using numerical simulation and analysis of the resulting time series, that the derivatives of the extremal maps can be used as an early-warning sign for the changing patterns.

This paper is organized as follows. In Section~\ref{sec:bifurcation}, we provide necessary and sufficient conditions for bifurcations of minimal invariant sets. We formulate the conditions in terms of the derivative of extremal maps, which define the shape of the graph of the set-valued map. In Section~\ref{sec:algorithm}, we present an algorithm for estimating the derivatives of the extremal maps from a time series generated by the corresponding random dynamical system. We prove also a probability statement about the reliability of the estimate containing an explicit dependence on the sample size. In Section~\ref{sec:koper}, we apply our method to predict changes in MMOs in the Koper model.

\emph{Acknowledgements.} Christian Kuehn was partially supported by a Lichtenberg Professorship of Volkswagen\-Stiftung. Giuseppe Malavolta and Martin Rasmussen were supported by an EPSRC Career Acceleration Fellowship EP/I004165/1. Martin Rasmussen was supported by funding from the European Union's Horizon 2020 research and innovation programme for the ITN CRITICS under Grant Agreement Number 643073.

\section{Bifurcations of set-valued mappings}
\label{sec:bifurcation}

Consider the dynamics of continuous mappings $f:\R\to \R$ perturbed by additive bounded noise, given by the random difference equation
\begin{displaymath}
  x_{n+1} = f(x_n) + \xi_n\,,
\end{displaymath}
where $\{\xi_n\}_{n\in\N}$ is a sequence of i.i.d.~random variables with values in a compact set $K\subset \R$. The collective behavior of all
future trajectories can then be studied via the set-valued mapping $F:\R\to\cK(\R)$, defined by $F(x):= \{f(x)+y\in\R: y\in K\}$,
where $\cK(\R)$ is the set of all nonempty compact subsets of $\R$, equipped with the Hausdorff distance $h:\cK(\R)\times \cK(\R)\to \R_0^+$.

Note that for two nonempty sets $A,B\subset \R$ and $x\in \R$, the \emph{distance} of $x$ to $A$ is given by $\dist(x,A) := \inf \{|x-y| : y\in A\}$, and the \emph{Hausdorff semi-distance} of $A$ and $B$ is defined by $d(A|B) := \sup \{\dist(x,B): x\in A\}$. Then the \emph{Hausdorff distance} of $A$ and $B$ is given by $h(A,B):= \max\{d(A|B),d(B|A)\}$.

In this paper, we study set-valued mappings $F:\R\to\cK(\R)$ without making reference to a deterministic mapping $f$ as above. We assume throughout that set-valued mappings $F:\R\to\cK(\R)$ satisfy the following conditions.

\begin{hypothesis}[on the set-valued mapping $F$]
  We assume the following hypotheses:
  \begin{itemize}
    \item[(H1)] $F(x)$ is an interval for all $x \in \R$.
    \item[(H2)] There exists an $\varepsilon >0$ such that $F(x)$ contains an interval of length $\varepsilon$ for all $x\in\R$.
    \item[(H3)] The so-called \emph{extremal maps} $f^-:\R\to\R$ and $f^+:\R\to\R$, given by
      \begin{equation}\label{botmap}
         f^-(x)  :=  \min F(x)  \quad \mbox{and} \quad f^+(x) =  \max F(x)\,,
      \end{equation}
      are continuously differentiable.
  \end{itemize}
\end{hypothesis}

Fundamental objects of interest for set-valued mappings are minimal invariant sets.

\begin{definition}[Minimal invariant set]
  A set  $E \in \mathcal{K}(\mathbb{R})$ is called a \emph{minimal invariant} for a set-valued mapping $F: \R \to \cK(\R)$ if $F(E) = E$ and $E$ does
  not contain any proper nonempty subset with this property.
\end{definition}

The importance of minimal invariant sets is due to the fact that they are the supports of stationary densities of the corresponding Markov semi-group \cite{ZmarrouHomburg}.

To study bifurcations of set-valued mappings, we consider families of set-valued dynamical mappings $\{F_\alpha\}_{\alpha\in A}$, where $A \subset \mathbb{R}$ is open.

\begin{hypothesis}[on the family of set-valued mappings $\{F_\alpha\}_{\alpha\in A}$]
  We assume the following hypotheses:
  \begin{itemize}
    \item[(H4)] The mapping $(\alpha, x) \mapsto F_\alpha(x)$ is continuous for all $\alpha\in A$ and $x \in \mathbb{R}$.
    \item[(H5)] The mapping $\alpha \mapsto F_\alpha(x)$ is continuous for every $\alpha\in A$, uniformly in $x\in\R$.
  \end{itemize}
\end{hypothesis}

Let $\mathcal{M}_\alpha$ be the union of all minimal invariant sets of the set-valued mapping $F_\alpha$, $\alpha\in A$. We are interested in discontinuous changes of $\mathcal{M}_\alpha$ under variation of $\alpha$, which we will call \emph{discontinuous topological bifurcations}.

\begin{definition}[Discontinuous topological bifurcation of minimal invariant sets]
  We say that a family of set-valued mappings $\{F_\alpha\}_{\alpha\in A}$ admits a \emph{discontinuous topological bifurcation of minimal invariant sets} at a point $\alpha_0 \in A$ if the mapping
  $\alpha\mapsto \cM_\alpha$ is discontinuous at $\alpha_0$ in Hausdorff distance.
\end{definition}

It will be convenient to define when a single minimal invariant set topologically bifurcates. We say that a family of set-valued mappings $\{F_\alpha\}_{\alpha\in A}$  topologically bifurcates at a minimal invariant set $E_{\alpha_0}$ of $F_{\alpha_0}$ if for any neighborhood $U$ of $E_{\alpha_0}$, the mapping $\alpha\mapsto U\cap \cM_\alpha$ is discontinuous at $\alpha_0$ in Hausdorff distance.

We aim at providing necessary conditions for a discontinuous bifurcation of minimal invariant sets in this paper. As a first step towards this goal, we show that a minimal invariant set satisfying certain properties on the derivatives of the extremal maps persists under continuous perturbations.

\begin{proposition}\label{prop:pers}
  Let $\{F_\alpha \}_{\alpha \in A}$ be a family of set-valued mappings satisfying the assumptions (H1)--(H5). Suppose that there exists a minimal invariant set $E_{\alpha_0} = [e_1, e_2]$ of $F_{\alpha_0}$ such that
 \begin{displaymath}
    \left|\frac{\txtd f^-_{\alpha_0}}{\txtd x}(e_1) \right| < 1\,
    \quad \left|\frac{\txtd f^+_{\alpha_0}}{\txtd x}(e_1) \right| < 1\,,
    \quad \left|\frac{\txtd f^-_{\alpha_0}}{\txtd x}(e_2) \right| < 1\,,
    \quad \left|\frac{\txtd f^+_{\alpha_0}}{\txtd x}(e_2) \right| < 1\,,
  \end{displaymath}
 where $f^-_{\alpha_0}$ and $f^+_{\alpha_0}$ are the extremal mappings of $F_{\alpha_0}$ as defined in (H3). Then there exists a non-bifurcating family of minimal invariant sets in a neighborhood of $\alpha_0$, i.e.~there exists $\delta > 0$ such that for every $\alpha \in B_\delta(\alpha_0)$, there exists a minimal invariant set $E_\alpha$ of $F_\alpha$, and $\{F_\alpha \}_{\alpha \in A}$ does not topologically bifurcate at the minimal invariant set $E_{\alpha_0}$ of $F_{\alpha_0}$.
\end{proposition}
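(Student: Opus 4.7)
The plan is to realize the pair $(e_1, e_2)$ as the unique local solution of a system of two equations arising from the invariance relation $F_{\alpha_0}(E_{\alpha_0}) = E_{\alpha_0}$, and then to apply the implicit function theorem; the four derivative hypotheses are precisely what makes the relevant Jacobian nonsingular. Since $E_{\alpha_0}$ is connected and the extremal maps are continuous, invariance forces
\begin{equation}\label{eq:endpts_plan}
e_1 = \min_{x \in [e_1, e_2]} f^-_{\alpha_0}(x), \qquad e_2 = \max_{x \in [e_1, e_2]} f^+_{\alpha_0}(x),
\end{equation}
and each of these extrema is attained either at an endpoint of $[e_1, e_2]$ or at an interior critical point of the corresponding extremal map. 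This leads to a finite case analysis according to where the argmin of $f^-_{\alpha_0}$ and the argmax of $f^+_{\alpha_0}$ sit.

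In each ``boundary'' case, I would translate~\eqref{eq:endpts_plan} into an implicit system $G(\alpha, a, b) = 0$ in the two unknowns $(a, b)$. For example, when the extrema are at $a$ and $b$ respectively (``monotone-increasing'' configuration), take $G = (f^-_\alpha(a) - a,\, f^+_\alpha(b) - b)$, whose Jacobian with respect to $(a, b)$ at $(\alpha_0, e_1, e_2)$ is diagonal with entries $\frac{\txtd f^-_{\alpha_0}}{\txtd x}(e_1) - 1$ and $\frac{\txtd f^+_{\alpha_0}}{\txtd x}(e_2) - 1$; in the ``monotone-decreasing'' configuration take $G = (f^-_\alpha(b) - a,\, f^+_\alpha(a) - b)$, for which the Jacobian determinant equals $1 - \frac{\txtd f^-_{\alpha_0}}{\txtd x}(e_2) \cdot \frac{\txtd f^+_{\alpha_0}}{\txtd x}(e_1)$; the two mixed configurations are analogous and each produces a determinant assembled from the four derivatives controlled in the hypothesis. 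In every such case the assumptions $\bigl|\frac{\txtd f^\pm_{\alpha_0}}{\txtd x}(e_i)\bigr| < 1$ force the Jacobian to be invertible, so the implicit function theorem, combined with (H4)--(H5), yields a $C^1$ curve $\alpha \mapsto (e_1(\alpha), e_2(\alpha))$ defined on some $B_\delta(\alpha_0)$ with $(e_1(\alpha_0), e_2(\alpha_0)) = (e_1, e_2)$.

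The final step is to check that $E_\alpha := [e_1(\alpha), e_2(\alpha)]$ is a minimal invariant set of $F_\alpha$ and that no topological bifurcation occurs at $E_{\alpha_0}$. Invariance reduces to verifying that the argmin/argmax configuration identified at $\alpha_0$ persists under perturbation: joint continuity of $f^\pm_\alpha$ from (H4)--(H5), combined with the sign information from the derivative hypotheses, preserves the relevant monotonicity pattern near the endpoints for $\alpha \in B_\delta(\alpha_0)$, so applying~\eqref{eq:endpts_plan} to $F_\alpha$ gives $F_\alpha(E_\alpha) = E_\alpha$. Minimality should then follow because any compact invariant subinterval $E' \subseteq E_\alpha$ has to satisfy an analogue of the same implicit system, and the local uniqueness granted by the implicit function theorem forces $E' = E_\alpha$; continuity $\alpha \mapsto E_\alpha$ in Hausdorff distance is then immediate from the $C^1$ dependence of the endpoints, ruling out a topological bifurcation at $E_{\alpha_0}$. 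The part I expect to be most delicate is the situation in which a minimum or maximum in~\eqref{eq:endpts_plan} is attained at an \emph{interior} critical point of $f^-_{\alpha_0}$ or $f^+_{\alpha_0}$: the endpoint derivative hypotheses do not appear directly in the implicit equation for that endpoint, so one has to augment the system by treating the interior critical point as an additional unknown (exploiting the $C^1$ regularity in (H3) together with a non-degeneracy assumption there), or alternatively argue that such a configuration is incompatible with $E_{\alpha_0}$ being minimal.
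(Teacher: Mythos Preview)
Your implicit-function approach is genuinely different from the paper's, which never tracks endpoint equations at all. The paper instead uses the four derivative hypotheses to show that $E_{\alpha_0}$ is an \emph{attractor} for the set-valued dynamics: local contractivity of both $f^\pm_{\alpha_0}$ near each $e_i$ produces a closed neighborhood $W \supset E_{\alpha_0}$ with $F_{\alpha_0}(W)\subset W$ and $d(F_{\alpha_0}^n(x)|E_{\alpha_0})\le L^n\, d(W|E_{\alpha_0})$ for some $L<1$. This separates $E_{\alpha_0}$ from its dual repeller, and the non-bifurcation conclusion is then read off directly from the abstract persistence results of \cite{LambRasmussenRodrigues}. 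The virtue of that route is that it is completely insensitive to \emph{where} on $[e_1,e_2]$ the extrema of $f^\pm_{\alpha_0}$ are attained.

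Your argument, by contrast, has real gaps at exactly this point. First, (H3)--(H5) give $C^1$ regularity only in $x$ and bare continuity in $\alpha$; nothing assumed makes $\partial_x f^\pm_\alpha$ jointly continuous in $(\alpha,x)$, so the implicit function theorem does not apply to your map $G$, and you certainly cannot conclude a ``$C^1$ curve'' in $\alpha$. Second, the interior-critical-point scenario you flag is not a corner case you can wave away: the hypotheses do not force the extrema in your endpoint relations to occur at $e_1$ or $e_2$, no non-degeneracy at an interior critical point is available, and minimality of $E_{\alpha_0}$ does not exclude this configuration. Third, even in the favourable boundary cases, the local uniqueness furnished by the implicit function theorem is only near $(e_1,e_2)$; a proper invariant subinterval of $E_\alpha$ would have endpoints away from $(e_1,e_2)$, so this says nothing about minimality of $E_\alpha$, and exhibiting a single continuous branch of invariant intervals does not control all of $U\cap\cM_\alpha$, which is what ``does not topologically bifurcate at $E_{\alpha_0}$'' actually demands.
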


Note that in \cite{ZmarrouHomburg}, a similar result is proved for random diffeomorphisms under the assumption that the minimal invariant set is \emph{isolated}. An isolated minimal invariant set $E$ of a set-valued mapping $F$ satisfies the property that for any open set $W \supset E$, there exists an open set $U$ with $E \subset U \subset W$ such that $F(U) \subset U$. It is easy to show that under the assumptions of Proposition~\ref{prop:pers}, the minimal invariant set $E_{\alpha_0}$ of $F_{\alpha_0}$ is isolated.

\begin{proof}
  We show that there exist two neighborhoods $W_1$ and $W_2$ of the points $e_1$ and $e_2$, respectively, such that for every $x \in W_1 \cup W_2$ one has
  \begin{equation}\label{eq:attractor1}
    \lim_{n\to \infty} d( F_{\alpha_0}^n(x)|E_{\alpha_0} ) = 0\,.
  \end{equation}
  Since $\Big|\frac{\txtd f^-_{\alpha_0}}{\txtd x}(e_1) \Big| < 1$ and $\Big|\frac{\txtd f^+_{\alpha_0}}{\txtd x}(e_1) \Big| < 1$, there exist and $L_1<1$ and a closed neighborhood $W_1$ of $e_1$ such that for every $x,y \in W_1$, we have
  \begin{displaymath}
    |f^-_{\alpha_0} (x) - f^-_{\alpha_0} (y)| \le L_1 | x- y | \quad \mbox{and}\quad  |f^+_{\alpha_0} (x) - f^+_{\alpha_0} (y)| \le L_1 | x- y |\,.
  \end{displaymath}
  Similarly, since $\Big|\frac{\txtd f^-_{\alpha_0}}{\txtd x}(e_2) \Big| < 1$ and $\Big|\frac{\txtd f^+_{\alpha_0}}{\txtd x}(e_2) \Big| < 1$, there exist and $L_2<1$ and a closed neighborhood $W_2$ of $e_2$ such that for every $x,y \in W_2$, we have
  \begin{displaymath}
    |f^-_{\alpha_0} (x) - f^-_{\alpha_0} (y)| \le L_2 | x- y | \quad \mbox{and}\quad  |f^+_{\alpha_0} (x) - f^+_{\alpha_0} (y)| \le L_2 | x- y |\,.
  \end{displaymath}
  With $L:=\max\{L_1, L_2\}<1$, it follows that for every $x \in W_i$, $i\in\{1,2\}$, we have
  \begin{equation}\label{oneitminus}
  | f^-_{\alpha_0} (x) - f^-_{\alpha_0} (e_i) | \le L | x - e_i | = L \dist (x, E_{\alpha_0})\,,
  \end{equation}
  and
  \begin{equation}\label{oneitplus}
  | f^+_{\alpha_0} (x) - f^+_{\alpha_0} (e_i) | \le L | x - e_i | = L \dist (x, E_{\alpha_0})\,.
  \end{equation}
  Define the set $W := W_1 \cup W_2 \cup E_{\alpha_0}$ and note that $F(W)\subset W$. It follows that
  \begin{displaymath}
    d(F^n_{\alpha_0}(x) | E_{\alpha_0}) \le L^n d(W|E_{\alpha_0}) \fa x \in W \mbox{ and } n\in \N\,,
  \end{displaymath}
  where in addition to \eqref{oneitminus} and \eqref{oneitplus}, we also use the invariance of $E_{\alpha_0}$. This establishes \eqref{eq:attractor1}. The conclusion follows using \cite[Proposition~4.1, Theorem~6.1]{LambRasmussenRodrigues}, which makes full use of (H1)--(H5). Any open neighborhood $B_\delta ( E_{\alpha_0})$ with $\delta > 0$ such that $B_\delta ( E_{\alpha_0}) \subset W$ separates the minimal invariant set from its dual according to \cite[Proposition~4.1]{LambRasmussenRodrigues}. By \cite[Theorem~6.1]{LambRasmussenRodrigues}, the necessary condition for a discontinuous topological bifurcation is not satisfied, which means that of $F_{\alpha_0}$ does not topologically bifurcate at the minimal invariant set $E_{\alpha_0}$. We note that the results in \cite{LambRasmussenRodrigues} are formulated for compact spaces, but the results can be applied if our family of set-valued mappings is truncated outside a compact set that contains the minimal invariant set $E_{\alpha_0}$ in its interior.
\end{proof}

It is possible to formulate a necessary condition for having a bifurcation that is a direct consequence of Proposition \ref{prop:pers} and is expressed in terms of the derivatives of the extremal maps.

\begin{corollary}[Necessary condition for a discontinuous topological bifurcation]\label{cor:necessary}
  Let $\{F_\alpha\}_{\alpha \in A}$ be a family of set-valued mappings satisfying (H1)--(H5). For a fixed $\alpha_0\in A$, let $E_{\alpha_0}=[e_1,e_2]$ be a minimal invariant set of $F_{\alpha_0}$. If $\{F_\alpha\}_{\alpha \in A}$ topologically bifurcates at $E_{\alpha_0}$, then at least one of the four conditions
  \[
   \left| \frac{\txtd f^{-}_{\alpha_0}}{\txtd x}(e_1) \right| \ge 1\,,
  \quad \left| \frac{\txtd f^+_{\alpha_0}}{\txtd x}(e_1) \right| \ge 1\,
,  \quad \left| \frac{\txtd f^{-}_{\alpha_0}}{\txtd x}(e_2) \right| \ge 1\,,
  \quad \left| \frac{\txtd f^+_{\alpha_0}}{\txtd x}(e_2) \right| \ge 1
  \]
  is satisfied.
\end{corollary}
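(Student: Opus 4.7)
The plan is to derive the corollary as the direct contrapositive of Proposition~\ref{prop:pers}. The proposition already states that if all four quantities
\[
\left|\frac{\txtd f^-_{\alpha_0}}{\txtd x}(e_1)\right|,\quad \left|\frac{\txtd f^+_{\alpha_0}}{\txtd x}(e_1)\right|,\quad \left|\frac{\txtd f^-_{\alpha_0}}{\txtd x}(e_2)\right|,\quad \left|\frac{\txtd f^+_{\alpha_0}}{\txtd x}(e_2)\right|
\]
are strictly less than $1$, then $\{F_\alpha\}_{\alpha\in A}$ does not topologically bifurcate at the minimal invariant set $E_{\alpha_0}$. The corollary is exactly the logical negation of this implication, so there is essentially nothing to prove beyond making the contrapositive explicit.

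Concretely, I would argue by contradiction. Suppose that $\{F_\alpha\}_{\alpha\in A}$ topologically bifurcates at $E_{\alpha_0}$, but that all four inequalities in the statement fail, i.e.~that each of the four absolute values of derivatives is strictly less than $1$. Then the hypotheses of Proposition~\ref{prop:pers} are satisfied at $\alpha_0$ with the minimal invariant set $E_{\alpha_0}=[e_1,e_2]$. The conclusion of Proposition~\ref{prop:pers} then yields a $\delta>0$ and a family of minimal invariant sets $E_\alpha$ of $F_\alpha$ for $\alpha\in B_\delta(\alpha_0)$ such that $\{F_\alpha\}_{\alpha\in A}$ does not topologically bifurcate at $E_{\alpha_0}$, contradicting the assumption. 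Hence at least one of the four inequalities must hold.

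There is no real obstacle here; the corollary is a bookkeeping restatement of the proposition. The only thing that needs a moment of care is to verify that the logical negation is formulated correctly: the proposition assumes strict inequalities $<1$ for all four derivatives, so the negation is that at least one of the four derivatives satisfies the weak inequality $\ge 1$, which is exactly what the corollary asserts. I would therefore keep the proof to just a few lines.
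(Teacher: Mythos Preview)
Your proposal is correct and matches the paper's own treatment: the paper does not give a separate proof of this corollary but simply states that it is a ``direct consequence of Proposition~\ref{prop:pers},'' which is exactly the contrapositive argument you spell out. Your observation about the strict-versus-weak inequality in the negation is the only point worth noting, and you handle it correctly.
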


The concept of collision between minimal invariant sets and their duals as presented in \cite{LambRasmussenRodrigues}, and that is used in the proof of Proposition~\ref{prop:pers} is very powerful, since it allows to ignore the interior of the minimal invariant set in the bifurcation analysis and focus the study on the dynamics close to its boundary. We will use this observation in Section~\ref{sec:algorithm} to develop early-warning signals for upcoming bifurcations.

We now present two examples of bifurcations in set-valued mappings.

\begin{example}\label{ex:pitch}
Consider the family of set-valued mappings $F_\alpha:\R\to\cK(\R)$, $\alpha >0$, given by
\[
F_\alpha(x) := \overline{B_\sigma \left( \tfrac{1}{2} \alpha \arctan(x) + \tfrac{1}{2} x \right)} \fa x\in\R\,,
\]
where $\sigma > 0 $. For a given and fixed $\sigma > 0$, there exists $\alpha_0>0$ such that the following statements hold, describing a set-valued pitchfork bifurcation.
\begin{itemize}
\item For $\alpha \ge \alpha_0$, there exist exactly two minimal invariant sets $E_\alpha^1 \subset (-\infty,0)$ and $E_\alpha^2\subset (0,\infty)$. They are symmetric with respect to the origin, since $F_\alpha$ is symmetric as well. For $\alpha > \alpha_0$, each extremal map $f_\alpha^+$ and $f_\alpha^-$ of the set-valued mapping $F_\alpha$ has a pair of attracting fixed points, which are boundary points of the minimal invariant sets $E_\alpha^1$ and $E_\alpha^2$.
\item   For $\alpha < \alpha_0$, there exists only one minimal invariant set $E_\alpha$, that is not a continuation of the two minimal invariant sets from above, since it also contains the interval between these two sets. Each extremal mapping $f_\alpha^+$ and $f_\alpha^-$ now has only one fixed point that bounds the new minimal invariant set (see Figure~\ref{figure:pitchbifdiagram}).
\end{itemize}
\begin{figure}[htbp]
  \begin{center}
   \begin{overpic}[width=7cm,unit=1mm]{./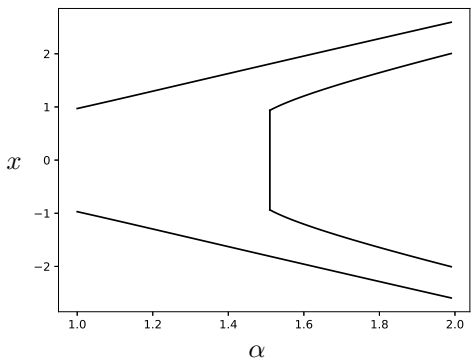}
\put(10,0){{\parbox{0.4\linewidth}{%
     \[
       \alpha
     \]}}}
\put(-22,25){{\parbox{0.4\linewidth}{%
     \[
        x
     \]}}}
\end{overpic}
    \caption{\label{figure:pitchbifdiagram} Dependence of the minimal invariant sets on $\alpha$ for the family of set-valued mappings $F_\alpha$, $\alpha>0$, from Example~\ref{ex:pitch}: there are two minimal invariant sets for $\alpha>\alpha_0$, and after the discontinuous bifurcation, there is only one bigger minimal invariant set for $\alpha\le \alpha_0$.}
  \end{center}
\end{figure}
Importantly, the value $\alpha_0$ depends on $\sigma$ and can be observed as the value such that one of the extremal mappings $f_\alpha^+$ or $f_\alpha^-$ is tangent to the identity line (see Figure~\ref{figure:pitchfork}), meaning the derivative in this intersection point is equal to $1$. We will use this later to develop an early-warning signal for this bifurcation.
\begin{figure}[htbp]
  \begin{center}
  \begin{minipage}{0.3\textwidth}
 \begin{overpic}[width=\textwidth,unit=1mm]{./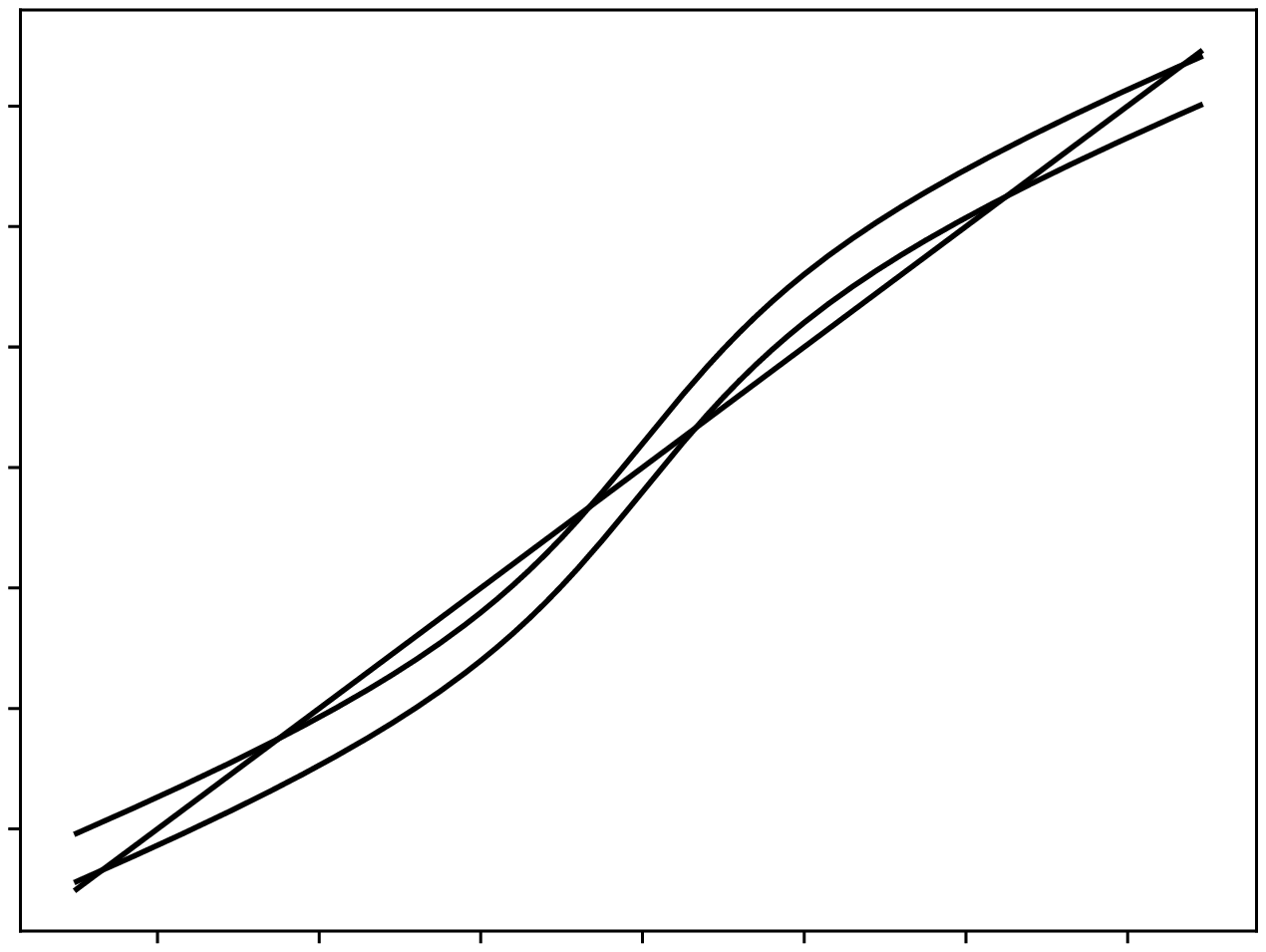}
\put(12,-3){{\parbox{0.4\linewidth}{%
     \[
       x
     \]}}}
\put(12,1){{\parbox{0.4\linewidth}{%
     \[
       0
     \]}}}
\put(-2,0){{\parbox{0.5\linewidth}{%
     \[
       -3
     \]}}}
\put(21.5,0){{\parbox{0.5\linewidth}{%
     \[
       3
     \]}}}

\put(-8.5,4){{\parbox{0.5\linewidth}{%
     \[
       -3
     \]}}}
\put(-7,13){{\parbox{0.5\linewidth}{%
     \[
       0
     \]}}}
\put(-7,21.5){{\parbox{0.5\linewidth}{%
     \[
       3
     \]}}}
\end{overpic}
  \end{minipage}
  \begin{minipage}{0.3\textwidth}
 \begin{overpic}[width=\textwidth,unit=1mm]{./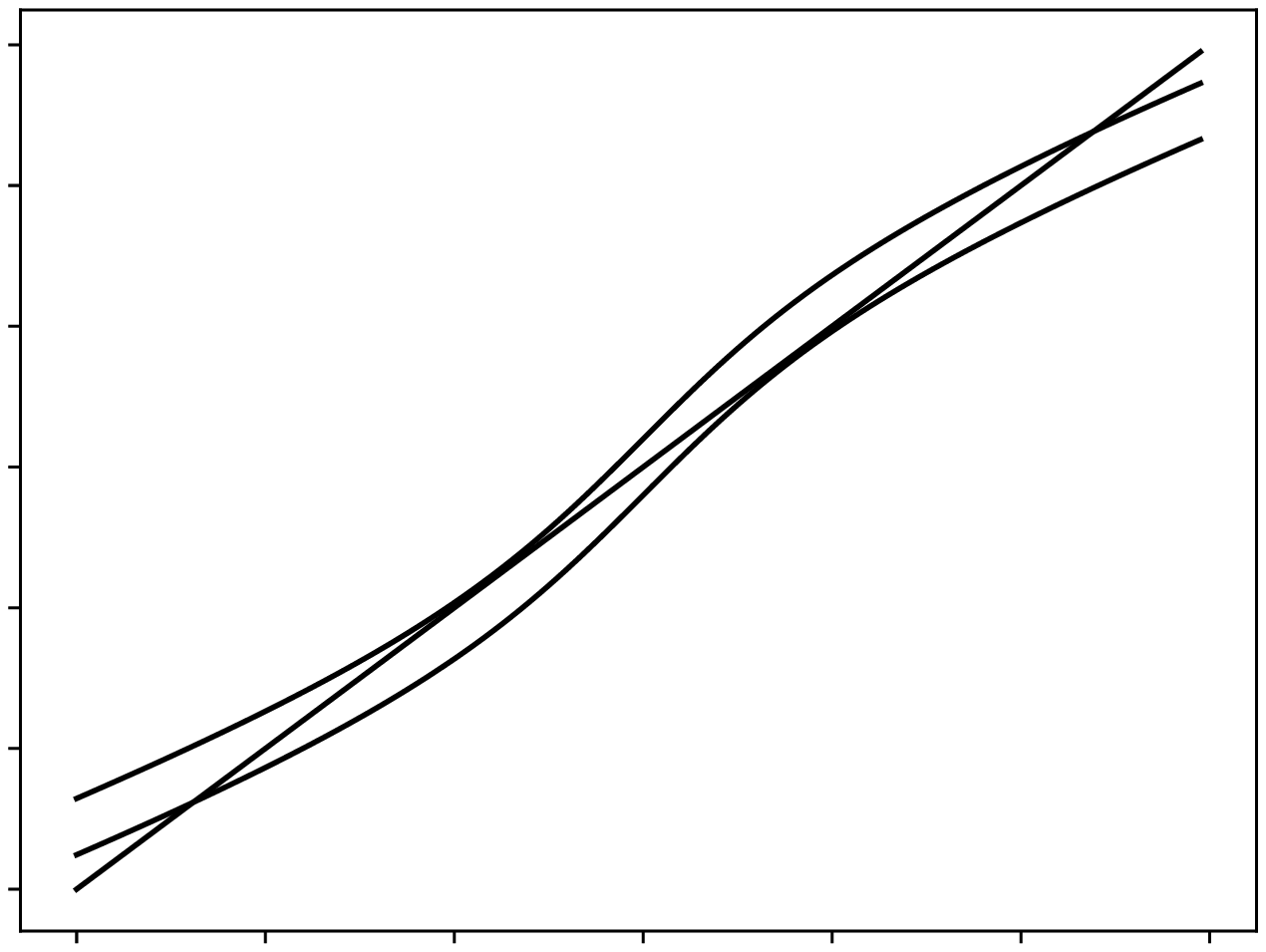}
\put(12,-3){{\parbox{0.4\linewidth}{%
     \[
       x
     \]}}}
\put(12,1){{\parbox{0.4\linewidth}{%
     \[
       0
     \]}}}
\put(-4,0){{\parbox{0.5\linewidth}{%
     \[
       -3
     \]}}}
\put(23.2,0){{\parbox{0.5\linewidth}{%
     \[
       3
     \]}}}

\put(-8.5,2.5){{\parbox{0.5\linewidth}{%
     \[
       -3
     \]}}}
\put(-7,13){{\parbox{0.5\linewidth}{%
     \[
       0
     \]}}}
\put(-7,23){{\parbox{0.5\linewidth}{%
     \[
       3
     \]}}}
\end{overpic}
  \end{minipage}
    \begin{minipage}{0.3\textwidth}
 \begin{overpic}[width=\textwidth,unit=1mm]{./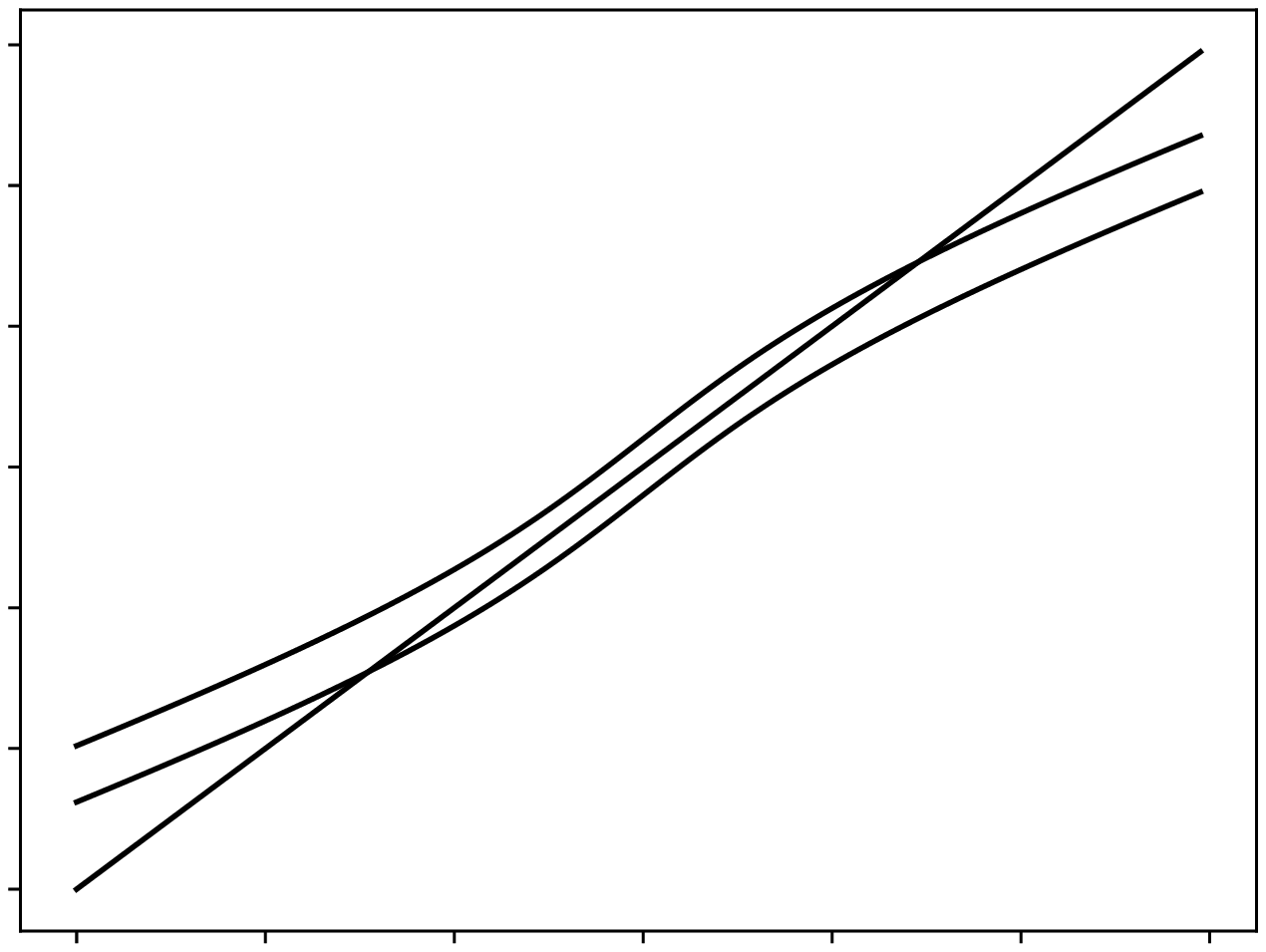}
\put(12,-3){{\parbox{0.4\linewidth}{%
     \[
       x
     \]}}}
\put(12,1){{\parbox{0.4\linewidth}{%
     \[
       0
     \]}}}
\put(-4,0){{\parbox{0.5\linewidth}{%
     \[
       -3
     \]}}}
\put(23.2,0){{\parbox{0.5\linewidth}{%
     \[
       3
     \]}}}

\put(-8.5,2.5){{\parbox{0.5\linewidth}{%
     \[
       -3
     \]}}}
\put(-7,13){{\parbox{0.5\linewidth}{%
     \[
       0
     \]}}}
\put(-7,23){{\parbox{0.5\linewidth}{%
     \[
       3
     \]}}}
\end{overpic}
  \end{minipage}
  \end{center}
  \caption{\label{figure:pitchfork} Profiles of the extremal maps $f_\alpha^+(x)$ and $f_\alpha^-(x)$ from Example~\ref{ex:pitch}. From left to right: $\alpha >\alpha_0$ (two minimal invariants sets), $\alpha = \alpha_0$ (two minimal invariant sets, $\alpha <\alpha_0$ (one minimal invariant set). }
\end{figure}

Note that this bifurcation can not be detected in the limit $\alpha\nearrow \alpha_0$ by considering derivatives of the extremal maps at the boundary of $E_\alpha$, $\alpha<\alpha_0$. This does not contradict the statement of
Proposition~\ref{prop:pers} and Corollary~\ref{cor:necessary}, since at $\alpha=\alpha_0$, there are two minimal invariant sets, and the necessary condition for a bifurcation established in Corollary~\ref{cor:necessary} holds for both of these invariant sets.
\end{example}

In the following proposition we provide a sufficient condition for a discontinuous topological bifurcation. We require that the extremal mappings are strictly monotone increasing.

\begin{proposition}[Sufficient condition for a discontinuous topological bifurcation]\label{prop:sufficient}
  Consider open intervals $I, U\subset\R$ and family of set-valued mappings $F_\alpha:I\to\cK(I)$, $\alpha \in U$. We assume that the extreme mapping $f^+_\alpha$ as defined in \eqref{botmap} is two times continuously differentiable. In addition, we require that $\alpha \mapsto f^+_\alpha(x)$ is differentiable for all $x \in \mathbb{R}$. Assume that for a fixed $\alpha_0\in U$, there exists a minimal invariant set $E_{\alpha_0} = [e_1(\alpha_0), e_2(\alpha_0)]$. Furthermore, we suppose that the following four conditions hold.
  \begin{enumerate}
    \item[(i)] $f^+_\alpha: I \to I$ is strictly monotone increasing for every $\alpha \in U$.
    \item[(ii)] $\frac{\txtd}{\txtd x}  f^+_{\alpha_0}(x) \big|_{x = e_2(\alpha_0)}  =1$.
    \item[(iii)] $\frac{\txtd^2 }{\txtd x^2} f^+_{\alpha_0}(x) \big|_{x = e_2(\alpha_0)} > 0$.
    \item[(iv)] $\frac{\txtd }{\txtd\alpha} f^+_{\alpha}(e_2(\alpha_0)) \big|_{\alpha = \alpha_0} > 0$.
  \end{enumerate}
  Then the family of set-valued mappings $(F_\alpha)_{\alpha\in U}$ admits a discontinuous topological bifurcations at the minimal invariant set $E_{\alpha_0}$.
\end{proposition}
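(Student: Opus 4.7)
The plan is to exploit condition (i) to reduce the problem to tracking a scalar fixed-point equation for the upper extremal map $f^+_\alpha$, and then to use (ii)--(iv) to see a one-sided saddle-node collision that destroys the right endpoint of $E_{\alpha_0}$ for $\alpha$ slightly above $\alpha_0$. I would first observe that $e_2(\alpha_0)$ is necessarily a fixed point of $f^+_{\alpha_0}$: since $E_{\alpha_0} = [e_1(\alpha_0),e_2(\alpha_0)]$ is an interval and each $F_{\alpha_0}(x)$ is a connected interval with maximum $f^+_{\alpha_0}(x)$, the supremum of $F_{\alpha_0}(E_{\alpha_0})$ equals $\sup_{x\in E_{\alpha_0}} f^+_{\alpha_0}(x) = f^+_{\alpha_0}(e_2(\alpha_0))$ by the strict monotonicity (i); invariance forces this supremum to be $e_2(\alpha_0)$. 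The same argument applied to any minimal invariant interval $E'_\alpha$ of $F_\alpha$ shows that its right endpoint is always a fixed point of $f^+_\alpha$, and this reduction is what allows (ii)--(iv) to enter.

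Next I would analyse $g_\alpha(x) := f^+_\alpha(x) - x$ near $(\alpha_0, e_2(\alpha_0))$. From the previous step and (ii) one has $g_{\alpha_0}(e_2(\alpha_0)) = 0 = g_{\alpha_0}'(e_2(\alpha_0))$, while (iii) gives $g_{\alpha_0}''(e_2(\alpha_0)) > 0$. Taylor expansion together with continuity of $(f^+_{\alpha_0})''$ then yields $\rho, c > 0$ with
\[
  g_{\alpha_0}(x) \ge c\,(x - e_2(\alpha_0))^2 \quad \text{for all } x \in K := [e_2(\alpha_0) - \rho,\, e_2(\alpha_0) + \rho]\,,
\]
so $e_2(\alpha_0)$ is the unique fixed point of $f^+_{\alpha_0}$ in $K$ and a tangency from below. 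I would then apply the implicit function theorem to $\partial_x g_\alpha(x) = 0$ at $(\alpha_0, e_2(\alpha_0))$, using the non-degeneracy (iii), to obtain a smooth critical-point curve $\phi(\alpha)$ with $\phi(\alpha_0) = e_2(\alpha_0)$. The minimum value $m(\alpha) := g_\alpha(\phi(\alpha))$ satisfies $m'(\alpha_0) = \partial_\alpha g_{\alpha_0}(e_2(\alpha_0)) = \partial_\alpha f^+_{\alpha_0}(e_2(\alpha_0)) > 0$ by (iv), so there exists $\eta > 0$ with $g_\alpha > 0$ on all of $K$ for every $\alpha \in (\alpha_0, \alpha_0 + \eta)$; in particular $f^+_\alpha$ has no fixed point in $K$ for such $\alpha$.

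To conclude, I would shrink a neighbourhood of $E_{\alpha_0}$ to $U := (e_1(\alpha_0) - \rho', e_2(\alpha_0) + \rho')$ with $\rho' \in (0,\rho)$ chosen small enough that $U \cap \mathcal{M}_{\alpha_0} = E_{\alpha_0}$, which is possible since distinct minimal invariant sets are pairwise disjoint. For $\alpha \in (\alpha_0, \alpha_0 + \eta)$ and any minimal invariant interval $E'_\alpha$ of $F_\alpha$, the first step forces its right endpoint to be a fixed point of $f^+_\alpha$, hence outside $K$; combined with $f^+_\alpha > \mathrm{id}$ on $K$, any $E'_\alpha$ meeting $K$ must extend past the right boundary of $U$. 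Thus either $U \cap \mathcal{M}_\alpha$ accumulates at $e_2(\alpha_0) + \rho'$, giving $h(U \cap \mathcal{M}_\alpha, U \cap \mathcal{M}_{\alpha_0}) \ge \rho'$, or $U \cap \mathcal{M}_\alpha$ is disjoint from $K$ and hence stays at distance at least $\rho$ from $e_2(\alpha_0) \in E_{\alpha_0}$; either way $\alpha \mapsto U \cap \mathcal{M}_\alpha$ is discontinuous at $\alpha_0$. The main obstacle will be the second step: the hypotheses only provide $C^2$ regularity of $f^+_\alpha$ in $x$ and pointwise differentiability in $\alpha$, so the implicit function theorem step is not entirely clean and may have to be replaced (or supplemented) by a compactness argument that uses the uniform $\alpha$-continuity from (H5) together with the quadratic lower bound on $g_{\alpha_0}$ to upgrade the pointwise positivity at $e_2(\alpha_0)$ coming from (iv) to uniform positivity of $g_\alpha$ throughout $K$.
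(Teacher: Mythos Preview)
Your proposal is correct and follows essentially the same route as the paper: reduce via the monotonicity (i) to the fixed-point equation $f^+_\alpha(e_2(\alpha)) = e_2(\alpha)$ for the right endpoint, and then use (ii)--(iv) to recognise a saddle-node bifurcation of $f^+_\alpha$ that eliminates fixed points near $e_2(\alpha_0)$ for $\alpha$ just above $\alpha_0$. The paper packages this as a short contradiction argument (assume a continuous family $E_\alpha$ persists, contradict with the saddle-node), whereas you unpack the saddle-node analysis and argue the Hausdorff discontinuity directly; your caution about the regularity needed for the implicit-function step is well placed, since the paper simply invokes the saddle-node theorem without commenting on this.
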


\begin{remark}
  Analogous conditions can be formulated for $e_1(\alpha_0) \in E_{\alpha_0}$ and the extremal mappings $f^-_{\alpha}$, $\alpha\in U$. In this case we would then require that
  $\frac{\txtd^2 }{\txtd x^2}f^-_{\alpha}(x)\big|_{x = e_1(\alpha_0)}<0 $ as well as $\frac{\txtd }{\txtd\alpha}f^-_{\alpha}(e_1(\alpha_0))\big|_{\alpha = \alpha_0} < 0$.
\end{remark}

\begin{proof}[Proof of Proposition~\ref{prop:sufficient}]
  Assume that the family $E_{\alpha}$ does not admit a discontinuous topological bifurcation at $\alpha_0$. Then there exists a neighborhood $W\subset U$ of $\alpha_0$ and a continuous family of minimal invariant sets $E_\alpha=[e_1(\alpha), e_2(\alpha)]$ for $F_\alpha$, $\alpha\in W$.  Because of (i) and the minimal invariance of $E_{\alpha}=[e_1(\alpha), e_2(\alpha)]$ for $F_{\alpha}$, it follows that
  \begin{equation}\label{eq:limbound}
    f^+_{\alpha}(e_2(\alpha)) = e_2(\alpha) \fa \alpha\in W\,.
  \end{equation}
  Conditions (ii), (iii), (iv) imply that the mappings $f_\alpha^+$, $\alpha\in W$, admit a saddle node bifurcation at $(\alpha_0,e_2(\alpha_0))$, which means in particular, that there exists an $\bar{\alpha}> \alpha_0$ and a neighborhood $V\subset I$ of $x_0$ such that the mapping $f_\alpha^+$ has no fixed points in $V$ for any $\alpha\in(\alpha_0,\bar \alpha)$. This contradicts \eqref{eq:limbound} and finishes the proof of this proposition.
\end{proof}

We now consider a topological bifurcation for a non-invertible mapping.

\begin{example}
\label{ex:doubling}
Consider the quadratic map $f_\alpha:\R\to\R$, $f_\alpha(x) = - x^2 + \alpha$.  We consider the corresponding set-valued mapping $F_\alpha:\R\to\cK(\R)$, given by
\[
F_\alpha(x) :=\overline{B_\sigma(  - x^2 + \alpha )}\,,
\]
for a fixed noise level $\sigma>0$. In the following, we determine the values of $\sigma > 0$ and $\alpha>0$ such that a single connected minimal invariant set transitions discontinuously into a minimal invariant set that is the union of two intervals that are periodically mapped into each other. We briefly outline how these two different types of minimal invariant sets (i.e.~the single interval before the bifurcation and the two intervals after the bifurcation) are found for this non-invertible mapping, analogously to considering the fixed points of the extremal maps in the order-preserving and invertible case studied in Example~\ref{ex:pitch}.

Consider first an $\alpha>0$ for which there exists a minimal invariant set $E_\alpha = [a,b]$ of $F_\alpha(x)$ such that $f_\alpha^\pm$ are strictly monotonically decreasing on $E_\alpha$. Note that $f^+_\alpha (a) = b$ and $f^-_\alpha ( b ) = a$, since $f^+_\alpha$ is order-reversing. We get
\[
a = f_\alpha^+ (f_\alpha^- ( a)) \quad \text{and} \quad b = f_\alpha^- (f_\alpha^+ ( b))\,.
\]

Consider now an $\alpha>0$  for which there exists a minimal invariant set $E_\alpha = E^1_\alpha \cup E^2_\alpha = [a,c] \cup [d,b]$ such that $f_\alpha^\pm$ are strictly monotonically decreasing on $E_\alpha$.  Since $F_\alpha(E^1_\alpha) = E^2_\alpha$ and  $F_\alpha(E^2_\alpha) = E^1_\alpha$, we have
\[
f_\alpha^+(a) = b\,, \quad f^-_\alpha(b) = a\,, \quad f^-_\alpha(c) = d  \quad \text{and}\quad f^+_\alpha(d) = c\,.
\]
It follows that
\[
a = f_\alpha^- (f_\alpha^+ ( a)), \quad
b = f_\alpha^+ (f_\alpha^- ( b)), \quad
c = f_\alpha^+ (f_\alpha^- ( c)), \quad d = f_\alpha^- (f_\alpha^+ ( d)).
\]

It follows in both cases that the boundary points of $E$ are fixed points of $f_\alpha^+\circ f_\alpha^-$ or $f_\alpha^-\circ f_\alpha^+$, so these mappings play a crucial role in the bifurcation analysis.
We describe the bifurcation occurring for $F_\alpha$ as illustrated in Figure~\ref{figure:diagrdoubl} for a minimal invariant set located in $[0, \infty)$. For a choice of a fixed and small enough $\sigma > 0$, there exist $\alpha_0 > 0$ and a neighborhood $B_\delta(\alpha_0)$, $\delta> 0$, of $\alpha_0$, both depending on $\sigma$, such that the following statements hold.

\begin{figure}[htbp]
 \begin{center}
   \begin{overpic}[width=7cm,unit=1mm]{./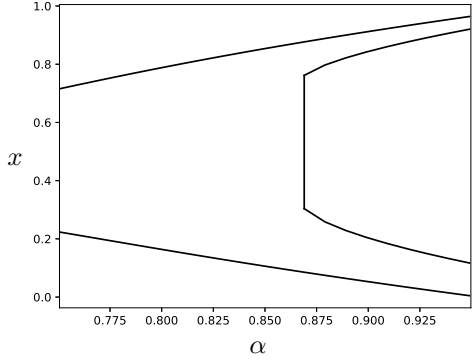}
\put(10,0){{\parbox{0.4\linewidth}{%
     \[
       \alpha
     \]}}}
\put(-22,25){{\parbox{0.4\linewidth}{%
     \[
        x
     \]}}}
\end{overpic}

  \caption{\label{figure:diagrdoubl} Dependence of the minimal invariant sets on $\alpha$ for the family of set-valued mappings $F_\alpha$ from Example~\ref{ex:doubling} with $\sigma = 0.015$: there is a connected minimal invariant set for $\alpha<\alpha_0$, and after the discontinuous bifurcation, there exists a disconnected minimal invariant given as union of two intervals for $\alpha \ge \alpha_0$.}
 \end{center}
\end{figure}

\begin{figure}[htbp]
  \begin{center}
  \begin{minipage}{0.3\textwidth}
  \begin{overpic}[width=\textwidth,unit=1mm]{./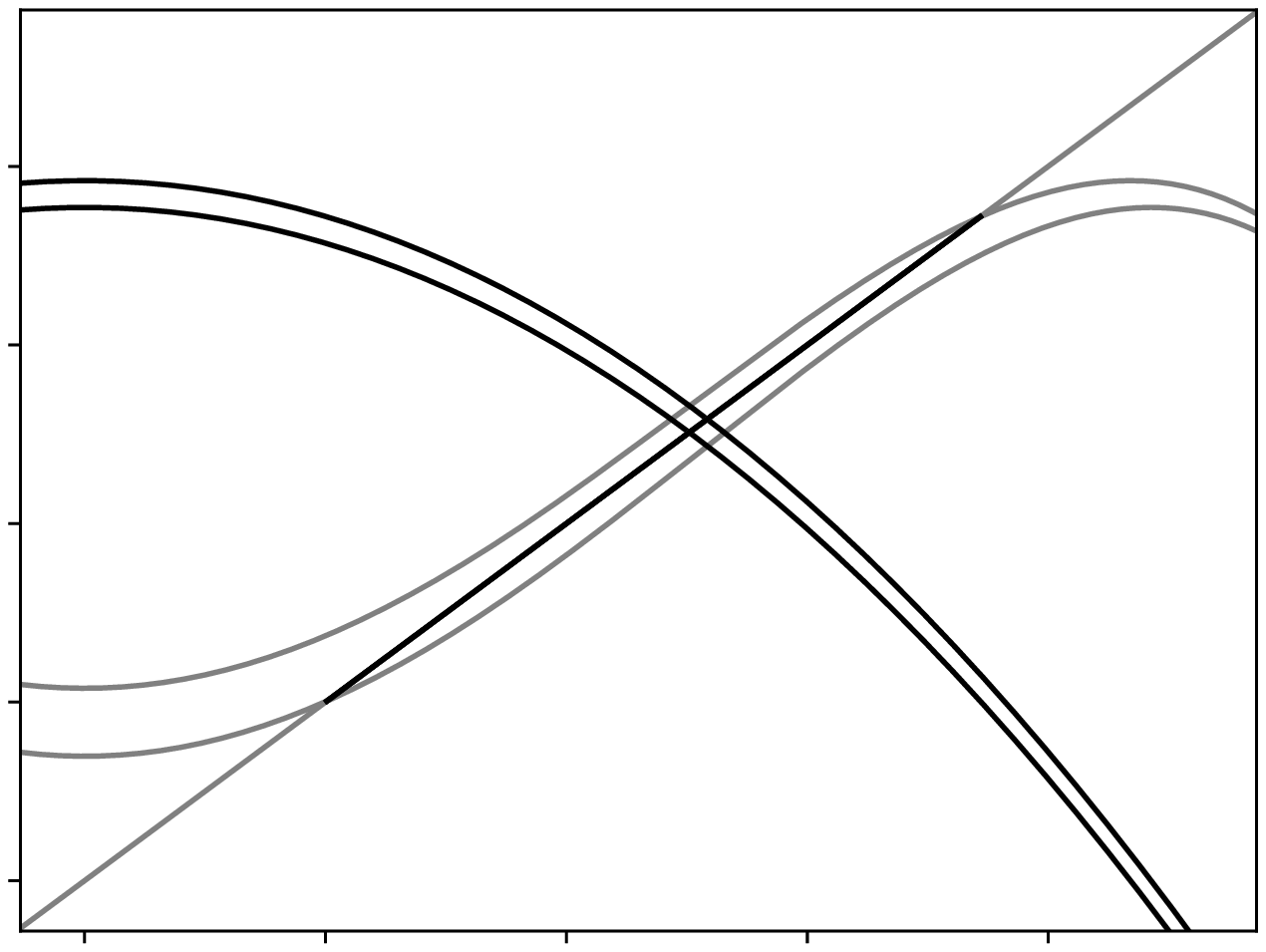}
\put(12,-3){{\parbox{0.4\linewidth}{%
     \[
       x
     \]}}}
\put(10,1){{\parbox{0.4\linewidth}{%
     \[
       0.4
     \]}}}
\put(-3,0){{\parbox{0.5\linewidth}{%
     \[
       0
     \]}}}
\put(19.5,0){{\parbox{0.5\linewidth}{%
     \[
       0.8
     \]}}}
\put(-6.5,3.5){{\parbox{0.5\linewidth}{%
     \[
       0
     \]}}}
\put(-7.5,12){{\parbox{0.5\linewidth}{%
     \[
       0.4
     \]}}}
\put(-7.5,20){{\parbox{0.5\linewidth}{%
     \[
       0.8
     \]}}}
\end{overpic}
  \end{minipage}
  \begin{minipage}{0.3\textwidth}
  \begin{overpic}[width=\textwidth,unit=1mm]{./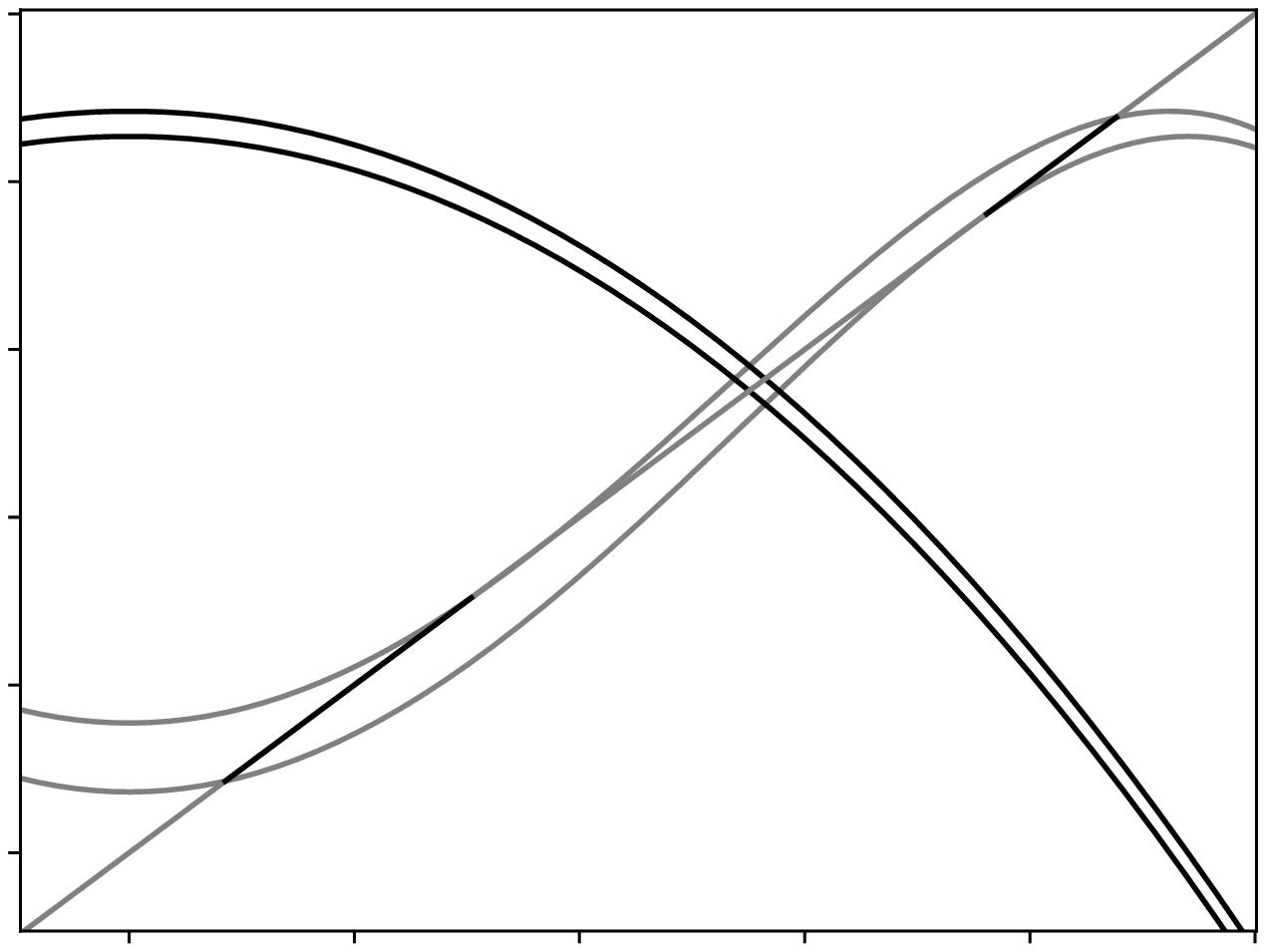}
\put(12,-3){{\parbox{0.4\linewidth}{%
     \[
       x
     \]}}}
\put(10.2,1){{\parbox{0.4\linewidth}{%
     \[
       0.4
     \]}}}
\put(-2,0){{\parbox{0.5\linewidth}{%
     \[
       0
     \]}}}
\put(19,0){{\parbox{0.5\linewidth}{%
     \[
       0.8
     \]}}}
\put(-6.5,4.1){{\parbox{0.5\linewidth}{%
     \[
       0
     \]}}}
\put(-7.5,12){{\parbox{0.5\linewidth}{%
     \[
       0.4
     \]}}}
\put(-7.5,19.5){{\parbox{0.5\linewidth}{%
     \[
       0.8
     \]}}}
\end{overpic}
  \end{minipage}
    \begin{minipage}{0.3\textwidth}
 \begin{overpic}[width=\textwidth,unit=1mm]{./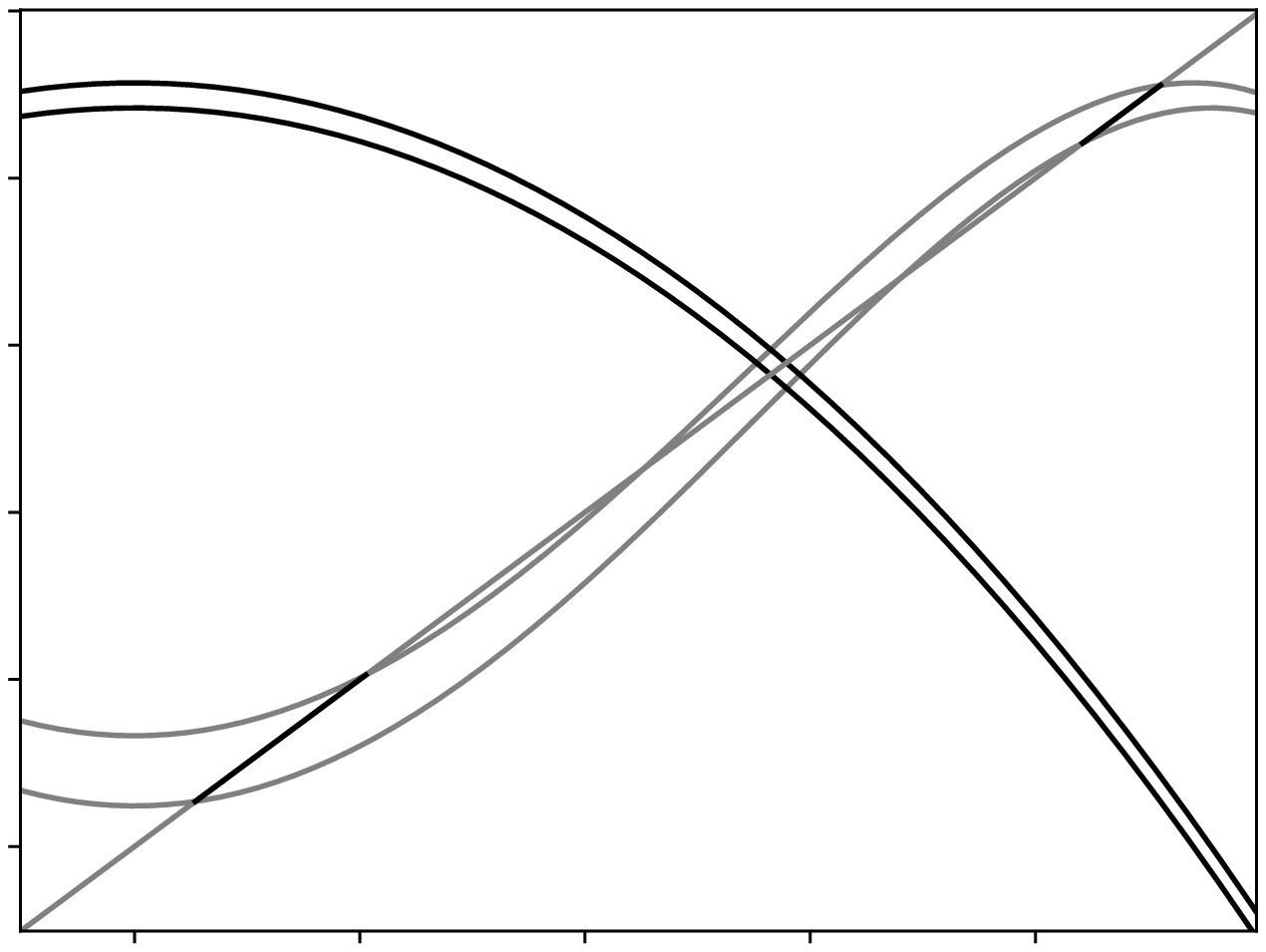}
\put(12,-3){{\parbox{0.4\linewidth}{%
     \[
       x
     \]}}}
\put(10.2,1){{\parbox{0.4\linewidth}{%
     \[
       0.4
     \]}}}
\put(-2,0){{\parbox{0.5\linewidth}{%
     \[
       0
     \]}}}
\put(19.2,0){{\parbox{0.5\linewidth}{%
     \[
       0.8
     \]}}}
\put(-6.5,4.1){{\parbox{0.5\linewidth}{%
     \[
       0
     \]}}}
\put(-7.5,12){{\parbox{0.5\linewidth}{%
     \[
       0.4
     \]}}}
\put(-7.5,20){{\parbox{0.5\linewidth}{%
     \[
       0.8
     \]}}}
\end{overpic}
  \end{minipage}
  \end{center}

  \caption{\label{figure:doubling} The extremal maps $f_\alpha^+$ and $f_\alpha^+$ (in black), $f_\alpha^- \circ f_\alpha^+$ and $f_\alpha^+ \circ f_\alpha^-$ (in gray) from Example~\ref{ex:doubling} with $\sigma = 0.015$. The minimal invariant sets are plotted in black on the identity line for convenience. \emph{Left:} $\alpha <\alpha_0$; one connected minimal invariant set. \emph{Middle:} $\alpha = \alpha_0$; one minimal invariant set with two connected components; the second iteration mappings are tangent to the identity. \emph{Right:} $\alpha > \alpha_0$; one minimal invariant set with two connected components. }
\end{figure}

\begin{itemize}
  \item[(i)]
    For $\alpha \in (\alpha_0-\delta, \alpha_0)$, there exists a single connected minimal invariant set $E_\alpha$. The map $f_\alpha^- \circ f_\alpha^+$ (bottom map in gray in Figure~\ref{figure:doubling}) has one (attractive) fixed points $q^-(\alpha)$ and the map $f_\alpha^+ \circ f_\alpha^-$ (top map in gray in Figure~\ref{figure:doubling}) has also one (attractive) fixed point $q^+(\alpha)$, with $q^-(\alpha) <  q^+(\alpha)$. Due to the above analysis, we obtain
    \[
    E_\alpha = [q^-(\alpha), q^+(\alpha)]\fa \alpha \in (\alpha_0-\delta, \alpha_0)\,.
    \]
    Note that the fixed points $q^-(\alpha)$ and $q^+(\alpha)$ can be continued on the interval $(\alpha_0-\delta, \alpha_0+\delta)$.
  \item[(ii)]
    At $\alpha = \alpha_0$, both maps $f_\alpha^- \circ f_\alpha^+$ and $f_\alpha^+ \circ f_\alpha^- $ undergo a saddle node bifurcation, and for each mapping, a new pair of fixed points is created: $r_1^-(\alpha)$ and $r_2^-(\alpha)$ for the mapping $f_\alpha^- \circ f_\alpha^+$ (note that $r_1^-(\alpha_0) = r_2^-(\alpha_0)$), and $r_1^+(\alpha)$ and $r_2^+(\alpha)$ for the mapping $f_\alpha^+ \circ f_\alpha^-$  (note that $r_1^+(\alpha_0) = r_2^+(\alpha_0)$). Both pairs are defined for $\alpha\in [\alpha_0, \alpha_0+\delta)$.\\
    For $\alpha \in [\alpha_0, \alpha_0 + \delta)$, the four fixed points of the second iterates $q^-(\alpha) < r_1^+(\alpha) < r_2^-(\alpha) < q^+(\alpha)$ are attracting and two minimal invariant sets for the second iterates exist, given by $E^1_\alpha = [q^-(\alpha),r_1^+(\alpha)]$ and $E_\alpha^2 = [r_2^-(\alpha),q^+(\alpha)]$. The set $[q^-(\alpha), q^+(\alpha)]$ is still invariant for $F_\alpha$, but it is not minimal.  We have
    \[
     F_\alpha( E_\alpha^1  ) =  E_\alpha^2 \textrm{ and } F_\alpha ( E_\alpha^2 ) = E_\alpha^1\,,
    \]
    and the minimal invariant set is given by $E_\alpha = E_\alpha^1 \cup E_\alpha^2$.
\end{itemize}
The choice of $\sigma$ is important, since for larger values of $\sigma$, the minimal invariant set may contain the critical point at $x=0$, and the boundary maps are then no longer order-reversing on the minimal invariant set. We choose $\sigma$ small enough, so that this does not happen. If the critical point is not contained in the invariant set, then the extremal mappings are strictly monotonically decreasing on the minimal invariant set, and the previous statements hold.

The bifurcation point $\alpha_0$ depends only on $\sigma$ and it is calculated as follows. Consider $f_\alpha^- \circ f_\alpha^+$ or $f_\alpha^+ \circ f_\alpha^-$, then $\alpha_0$ is the parameter value such that
\begin{equation}\label{rel1}
f_{\alpha}^- (f_{\alpha}^+ ( x)) = x \quad \text{and}\quad f_{\alpha}^+ (f_{\alpha}^- ( x))  = x
\end{equation}
for $x > 0$ has exactly two solutions, one of which is a tangential to the identity line. The crucial point here is that this happens for both equations in \eqref{rel1} at the same~$\alpha_0$. In fact, assume that the map $f_{\alpha_0}^+ \circ f_{\alpha_0}^-$ has only one fixed point $q^+(\alpha_0)>0$, and the map $f_{\alpha_0}^- \circ f_{\alpha_0}^+$ has two fixed points $0< q^-(\alpha_0) <  r_1^-(\alpha_0) = r_2^-(\alpha_0)$. Then define the set $E^2_{\alpha_0} := [r_2^-(\alpha_0),q^+(\alpha_0)]$. Since the extremal mappings $f^+_{\alpha_0}$ and $f^-_{\alpha_0}$ are strictly monotonically decreasing on $[q^-(\alpha_0),q^+(\alpha_0)]$, we have
\begin{equation}\label{eq:cycle1}
  E^1_{\alpha_0} := F_\alpha(E^2_{\alpha_0}) = [f_{\alpha_0}^-(q^+(\alpha_0)), f_{\alpha_0}^+(r_2^-(\alpha_0))]\,.
\end{equation}
It is easy to see that $E^1_{\alpha_0} \cap E^2_{\alpha_0} = \emptyset$. We iterate again and use the fact that $q^+(\alpha_0)$ and $r_2^-(\alpha_0)$ are fixed points of $f_{\alpha_0}^+ \circ f_{\alpha_0}^-$ and $f_{\alpha_0}^- \circ f_{\alpha_0}^+$, respectively. We obtain
\begin{equation}
\label{eq:cycle2}
F_\alpha(E^1_{\alpha_0}) = [f_{\alpha_0}^- ( f_{\alpha_0}^+(r_2^-(\alpha_0))),f_{\alpha_0}^+ (f_{\alpha_0}^-(q^+(\alpha_0)))] = [r_2^-(\alpha_0),q^+(\alpha_0)] =  E^1_{\alpha_0}\,.
\end{equation}
The sets $E^1_{\alpha_0}$ and $E^2_{\alpha_0}$ are cyclically mapped into each other by $F_\alpha$ and by combining \eqref{eq:cycle1} and \eqref{eq:cycle2} we have
\begin{align*}
& [f_{\alpha_0}^-(q^+(\alpha_0)), f_{\alpha_0}^+(r_2^-(\alpha_0))] = E^2_{\alpha_0} = \\
  = & F_\alpha (E^1_{\alpha_0}) = [ f_{\alpha_0}^-( f_{\alpha_0}^+ (f_{\alpha_0}^-(q^+(\alpha_0))) ), f_{\alpha_0}^+( f_{\alpha_0}^- ( f_{\alpha_0}^+(r_2^-(\alpha_0)))) ]\,.
\end{align*}
Then one has
\[
  f_{\alpha_0}^+( f_{\alpha_0}^- ( f_{\alpha_0}^+(r_2^-(\alpha_0)))) = f_{\alpha_0}^+(r_2^-(\alpha_0))\,,
\]
i.e.~$f_{\alpha_0}^+(r_2^-(\alpha_0))$ is a fixed point of $f_{\alpha_0}^+ \circ f_{\alpha_0}^-$. It is straightforward to calculate that $f_{\alpha_0}^+(r_2^-(\alpha_0)) > q^-(\alpha_0) $ and $f_{\alpha_0}^+(r_2^-(\alpha_0)) < r_2^-(\alpha_0)$. This contradicts the assumption that $f_{\alpha_0}^+ \circ f_{\alpha_0}^-$ has only one fixed point for $x > 0$. It follows that $f_{\alpha_0}^- \circ f_{\alpha_0}^+$ and $f_{\alpha_0}^+ \circ f_{\alpha_0}^-$ both have two exactly different fixed points at $\alpha = \alpha_0$ and $x > 0$, one of which is tangent to the identity.

We note that the extremal maps at the boundary of the sets still satisfy the necessary condition of Corollary~\ref{cor:necessary} (for any $x > \frac{1}{2}$, $x \in E_\alpha$, one has $|(f^\pm_\alpha)^\prime (x)| > 1$),  but the derivative of the extremal maps at the boundary is not a good indicator for this bifurcation, since the bifurcation is not caused by an exchange of stability at the boundary of one of the extremal maps. The compositions of the extremal maps $f_\alpha^- \circ f_\alpha^+$ and $f_\alpha^+ \circ f_\alpha^-$ must be considered instead here, although the bifurcation diagram is very similar to the the pitchfork bifurcation described in Example~\ref{ex:pitch}.
\end{example}

\section{Early-warning signals for bifurcations in random dynamical systems with bounded noise}
\label{sec:algorithm}

Set-valued mappings describe the topological part of a discrete-time random dynamical system with bounded noise, in the sense that the support of the ergodic stationary measures correspond
to the minimal invariant sets of the set-valued mapping \cite{ZmarrouHomburg}. This means that the results of the previous section extend to the context of random dynamical systems. We have seen in the previous section that that the derivative of the extremal mappings at the boundary of the minimal invariant sets can be used to indicate a discontinuous topological bifurcation. In this section, we present an algorithm for estimating the derivative of the extremal maps from a time series generated by iterations of a random dynamical system with bounded noise.

Consider a time series  $S = \{ x_0, x_1, x_2, \dots,x_n \}$ as a realization of iterations of
\begin{equation}\label{eq:genRDS}
x_{i+1} =   h(x_i,\xi_i) \fa i\in \{0,\dots,n-1\}\,,
\end{equation}
where $h : \mathbb{R} \times [0,1] \to \mathbb{R}$, and $\{\xi_i\}_{i\in\N_0}$ is a sequence of i.i.d.~random variables with values in $[0,1]$. Note that these random variables naturally have finite expectation and variance. The map $h$ has an associated set-valued mapping $F$ defined by
\[
F(x) = h(x,[0,1]) \fa x\in \R\,.
\]
We suppose in the following that the extremal maps $f^+$ and $f^-$ of $F$, as defined in \eqref{botmap}, are continuously differentiable.

Let $E=[e_1,e_2]$ be a minimal invariant set for $F$ such that $f^-(e_1)=e_1$ and $f^+(e_2)=e_2$. We aim at providing an approximation of the derivative of $f^-$ at $e_1$, depending on the choice of two intervals of radius $\delta$ and a choice of two intervals in the image of the map $h$ of size $\varepsilon$. Note that there is a analogous algorithm for estimating the derivative of $f^+$ at $e_2$.

\begin{algorithm}\label{alg}
  We approximate $\frac{\de f^-}{\de x} (e_1)$ as follows.
  \begin{enumerate}
    \item[(A1)]
    Consider two subintervals of $E$ of length $\delta>0$, given by
    \begin{equation*}
      I_1 := (m_1 ,m_1+\delta)\qquad \text{and}\qquad I_2:= (m_2,m_2+\delta),
    \end{equation*}
    such that $\delta_0 := m_2 - m_1 - \delta > 0$ (this guarantees that the intervals $I_1$ and $I_2$ are separated), and define $$I:=[m_1,m_2+\delta].$$
    We require that $I$ is close $e_1$.
    \item[(A2)]
    Assume that there exist $w_1, \dots, w_{k_1} \in S\cap I_1$ and $z_1, \dots, z_{k_2} \in S\cap I_2$ such that $w_i = x_{\gamma_i}$ and $z_i = x_{\eta_i}$ for some sequences $\{ \gamma_i \}_{i = 1,\dots, k_1}$ and $\{\eta_i\}_{i = 1,\dots, k_2}$, and each consecutive iteration $w^+_{i} = x_{\gamma_i+1}$ is contained in
    $$
      \tilde{I_1} :=\Big[\min_{x \in I_1} f^-(x)  ,\min_{x \in I_1} f^-(x) + \varepsilon \Big]\,,
    $$
    and $z^+_{i} = x_{\eta_i+1}$ is contained in
    $$
      \tilde{I_2} :=\Big[\min_{x \in I_2} f^-(x) ,\min_{x \in I_2} f^-(x) + \varepsilon \Big]
    $$
    for a given and fixed $\varepsilon > 0$.
    \item[(A3)]
    The approximation of the derivative of the function $f^-$ in $e_1$ is then given by
    \[
      D := \frac{\frac{1}{k_1}  \sum_{i=1}^{k_1} z_i^+  -  \frac{1}{k_2} \sum_{i=1}^{k_2} w_i^+ }{\Delta}\,,
    \]
    where $\Delta = m_2 +\delta - m_1$ is the length of the interval $I$.
  \end{enumerate}
\end{algorithm}
We note that the approximation $D$ depends on the time series $S$, on $\varepsilon$ and on the choices of the intervals $I_1$ and $I_2$.

It is well known that the Markov semi-group generated by \eqref{eq:genRDS} admits an ergodic stationary measure that is supported on the minimal invariant set $E$ (see \cite{ZmarrouHomburg}). We assume that the stationary measure is absolutely continuous with respect to the Lebesgue measure on $E$, and the Lebesgue density does not vanish in the interior of $E$. In addition, \eqref{eq:genRDS} induces a Markov chain $\{ X_i \}_{i \in \N_0}$
that is strictly stationary and ergodic.

We rewrite the map $h$ as
\[
h (x, \xi) = f^-(x) + g(x,\xi)\,,
\]
where $g : \mathbb{R} \times [0,1] \to \mathbb{R}$ is a map of the same regularity as $h$. We introduce the random variable
\begin{equation}\label{eq:xplus}
  X^+ := f^-(X) + g(X, \xi )\,,
\end{equation}
where $X$ is distributed according to the stationary distribution of the Markov semi-group generated by \eqref{eq:genRDS}. Note that $X^+$ has the same distribution as $X$, but we will mainly be interested in the distribution of $(X,X^+)$, and it can be shown that the delayed Markov chain $\{(X_i,X_{i+1})\}_{i\in\N_0}$ is stationary and ergodic with this distribution.

We rewrite the difference quotient introduced in (A3) in terms of the above random variables, and we show that the quantity calculated in (A3) is an estimate from below of the derivative of the extremal map $f^-$ in the interval $I$. Note that we require $I_1$ and $I_2$, and hence $I$, to be close to $e_1$, as stated in (A1), if we want to approximate $\frac{\de f^-}{\de x} (e_1)$.

In (A3), two time averages are performed in order to approximate the derivative. We now use the ergodic theorem to justify this approach.

\begin{proposition}\label{prop:algo}
  Consider the ergodic stationary process $\{ X_i \}_{i \in \N_0}$, and let $I_1$, $I_2$, $\tilde{I_1}$ and $\tilde{I_2}$ be the intervals introduced in (A1) and (A2). Define for each $i\in\N$ the random variables
  \[
  \chi_{1}(i) := \1_{I_1} (X_i) \1_{\tilde{I_1}} ( X_{i+1}) \quad \mbox{ and } \quad
  \chi_{2}(i) := \1_{I_2} (X_i) \1_{\tilde{I_2}} ( X_{i+1})\,,
  \]
  where $\1_A$ denotes the indicator function, i.e.~$\1_A(x) = 1$ if $x\in A$, and $\1_A(x) = 0$ if $x\notin A$.
  Define for each $n\in \N$ the random variables
  \begin{equation}\label{eq:kn}
    k_1(n) :=  \sum_{i = 1}^n \chi_{1}(i) \quad \mbox{ and }\quad k_2(n)  :=  \sum_{i = 1}^n \chi_{2}(i)\,,
  \end{equation}
  and consider the random variables $X$ and $X^+$ as introduced in \eqref{eq:xplus}. Then the following limits exist almost surely, and the two inequalities
  \begin{equation}
  \label{eq:sum1}
    \lim_{n \to \infty} \frac{1}{k_1(n)} \sum_{i=1}^n \chi_{1}(i) h(X_i, \xi_i ) \ge \frac{ \mathbb{E}\big[ \1_{I_1} (X) \1_{\tilde{I_1}} ( X^+) f^-(X) \big] }{\mathbb{P}\big(X \in I_1, X^+ \in \tilde{I_1} \big)}
  \end{equation}
  and
  \begin{equation}
  \label{eq:sum2}
  \lim_{n \to \infty} \frac{1}{k_2(n) } \sum_{i=1}^n \chi_{2}(i)  h(X_i,\xi_i)   \le \frac{\mathbb{E}\big[ \1_{I_2} (X) \1_{\tilde{I_2}} ( X^+) f^-(X) \big]}{\mathbb{P}\big(X \in I_2, X^+ \in \tilde{I_2} \big)}  + \varepsilon
  \end{equation}
  hold.
\end{proposition}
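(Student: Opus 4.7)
The plan is to interpret each side of (A3) as a ratio of ergodic Ces\`aro averages over the stationary pair process $\{(X_i,X_{i+1})\}_{i\in\N_0}$ and to pass to the limit by Birkhoff's ergodic theorem. The key observation is that $h(X_i,\xi_i)=X_{i+1}$ along trajectories, so the random sums appearing in the proposition are simply time averages of bounded measurable functions of the consecutive pair $(X_i,X_{i+1})$. Since $\{X_i\}$ is strictly stationary and ergodic, the pair process inherits both properties (it is a measurable factor of the shift on $\{X_i\}$), and the law of $(X_1,X_2)$ under the stationary initial distribution coincides with the law of $(X,X^+)$ defined in \eqref{eq:xplus}.

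First I would rewrite the quotient on the left-hand side of \eqref{eq:sum1} as
\[
\frac{1}{k_1(n)}\sum_{i=1}^n \chi_1(i)\, h(X_i,\xi_i) \;=\; \frac{\frac{1}{n}\sum_{i=1}^n \1_{I_1}(X_i)\,\1_{\tilde{I_1}}(X_{i+1})\, X_{i+1}}{\frac{1}{n}\sum_{i=1}^n \1_{I_1}(X_i)\,\1_{\tilde{I_1}}(X_{i+1})}\,.
\]
Applying the ergodic theorem separately to numerator and denominator, the denominator converges almost surely to $\P(X\in I_1,\, X^+\in \tilde{I_1})$, which is strictly positive under the standing assumption that the stationary density is positive on the interior of $E$ and the geometric fact that $f^-(I_1)$ intersects the interior of $\tilde{I_1}$ once $\delta$ is small relative to $\varepsilon$. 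The numerator converges almost surely to $\E[\1_{I_1}(X)\,\1_{\tilde{I_1}}(X^+)\, X^+]$, and the same reasoning yields the limit of the $I_2$ ratio.

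To translate expectations involving $X^+$ into expectations involving $f^-(X)$, I would invoke the decomposition $X^+ = f^-(X) + g(X,\xi)$. Because $f^-(x)=\min_{\xi\in[0,1]} h(x,\xi)$, we have $g(X,\xi)\ge 0$ almost surely, which gives the pointwise bound $X^+\ge f^-(X)$; substituting this lower bound into the numerator of the $I_1$-ratio produces \eqref{eq:sum1} directly. For \eqref{eq:sum2}, on the event $\{X\in I_2,\, X^+\in \tilde{I_2}\}$ the defining interval of $\tilde{I_2}$ yields
\[
X^+ \;\le\; \min_{x\in I_2} f^-(x) + \varepsilon \;\le\; f^-(X) + \varepsilon\,,
\]
so linearity of expectation followed by division by $\P(X\in I_2,\, X^+\in\tilde{I_2})$ factors the constant $\varepsilon$ out exactly as stated.

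The main obstacle I anticipate is not any single computation but the careful justification of the ergodic step: one must verify that the pair process $\{(X_i,X_{i+1})\}$ is stationary and ergodic with the correct joint law, that the integrands $\1_{I_j}(X_i)\1_{\tilde{I_j}}(X_{i+1}) X_{i+1}$ lie in $L^1$ of this stationary measure (immediate from boundedness of $X$ on $E$), and that the denominator's ergodic limit is strictly positive so that the ratio of the two a.s.\ limits equals the a.s.\ limit of the ratio. Once these standard facts are in place, the two inequalities follow by combining the ergodic limits with the sign and magnitude constraints on $g(X,\xi)$ described above.
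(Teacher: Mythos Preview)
Your proposal is correct and follows essentially the same route as the paper: split the ratio into two Ces\`aro averages, apply Birkhoff's ergodic theorem to the stationary ergodic pair process $\{(X_i,X_{i+1})\}$ with the observables $\1_{I_j}(X_i)\1_{\tilde I_j}(X_{i+1})$ and $\1_{I_j}(X_i)\1_{\tilde I_j}(X_{i+1})X_{i+1}$, and then use the pointwise bounds $0\le g(X,\xi)$ (for \eqref{eq:sum1}) and $g(X,\xi)\le\varepsilon$ on the event $\{X\in I_2,\,X^+\in\tilde I_2\}$ (for \eqref{eq:sum2}). The only cosmetic difference is that the paper inserts the bound on $g$ \emph{before} passing to the ergodic limit, whereas you pass to the limit first and bound inside the resulting expectation; the two are equivalent here since all integrands are bounded.
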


\begin{remark}
  Denote the joint Lebesgue density of $X$ and $X^+$ by $\phi : \mathbb{R}\times \mathbb{R} \to \mathbb{R}$. We aim at rewriting the quantity
  \[
    \frac{ \mathbb{E} \left[ \1_{I_1} (X) \1_{\tilde{I_1}} ( X^+) f^-(X) \right] }{\mathbb{P}\big(X \in I_1, X^+ \in \tilde{I_1} \big)}\,.
  \]
  Firstly, the numerator reads as
  \[
    \mathbb{E} \left[ \1_{I_1} (X) \1_{\tilde{I_1}} ( X^+) f^-(X) \right] =  \int_{\mathbb{R}} \int_{\mathbb{R}} \1_{I_1} ( x) \1_{\tilde{I_1}} ( y)   f^-(x) \phi(x,y) \,\textrm{d} x \,\textrm{d} y\,.
  \]
  Note that the function $x \mapsto \int_{\tilde{I_1}}  \phi(x,y) \textrm{d} y$ is strictly positive whenever $x \in I_1$, since $\mathbb{P}\big( X_{i+1} \in \tilde{I_1} | X_i \in I_1\big) > 0$.
  The extra term $\1_{I_1} ( X) \1_{\tilde{I_1}} ( X^+) $ inside the integral restricts the domain of integration, and hence, $\phi$ is not a probability density on $I_1 \times \tilde{I_1}$. However, $\mathbb{P}(X_1 \in I_1, X_2 \in \tilde{I_1} ) $ is the desired normalization factor, as we have
  \[
  \mathbb{P}(X_1 \in I_1, X_2 \in \tilde{I_1} ) = \int_{I_1} \textrm{d}x  \int_{\tilde{I_2}}  \phi(x,y)\, \textrm{d}y > 0\,.
  \]
  Hence, the function
  \[
  x \mapsto \frac{1}{\int_{I_1} \textrm{d}x  \int_{\tilde{I_2}}  \phi(x,y) \textrm{d}y } \int_{\tilde{I_2}}  \phi(x,y) \,\textrm{d} y
  \]
  is  a probability density describing the statistics of those points in $I_1$ for which the consecutive iteration is in $\tilde{I_1}$.
\end{remark}

\begin{proof}
  We prove \eqref{eq:sum2}. Note that
  \begin{displaymath}
    \lim_{n \to \infty} \frac{1}{k_2(n)} \sum_{i=1}^n \chi_{2}(i) h(X_i,\xi_i )
    =\lim_{n \to \infty} \frac{1}{k_2(n)} \frac{n}{n} \sum_{i=1}^n \chi_{2}(i) \big( f^-(X_i) + g(X_i,\xi_i )\big)\,.
  \end{displaymath}
  We consider the limits
  \begin{equation}
  \lim_{n \to \infty} \frac{1}{n} k_2(n) \quad\mbox{ and }\quad
  \lim_{n \to \infty} \frac{1}{n} \sum_{i=1}^n \chi_{2}(i) ( f^-(X_i) + g(X_i,\xi_i ))
  \end{equation}
  separately.
  For both limits the ergodic theorem is applied to the delayed Markov chain $\{(X_i,X_{i+1})\}_{n\in\N_0}$, and we consider the observables
  \begin{align*}
    O_1 ( X_i, X_{i+1} )  &=  \1_{I_2} (X_i) \1_{\tilde{I_2}} (X_{i+1})\,,\\
    O_2( X_i , X_{i+1}) &= \1_{I_2} ( X_i ) \1_{\tilde{I_2}} ( X_{i+1}) ( f^-( X_i ) + g ( X_i , \xi_i )  ) = \1_{I_2} ( X_i ) \1_{\tilde{I_2}} ( X_{i+1}) X_{i+1}\,.
  \end{align*}
  The first observable leads to
  \[
  \lim_{n \to \infty} \frac{1}{n} k_2(n) =   \lim_{n \to \infty} \frac{1}{n} \sum_{i = 1}^n \1_{I_2} ( X_i) \1_{\tilde{I_2}} (X_{i+1})  =  \mathbb{P}( X \in I_2 , X^+ \in \tilde{I_2} )\,.
  \]
  The second observable yields
  \begin{align*}
  &\lim_{n \to \infty} \frac{1}{n}  \sum_{i=1}^n \chi_{2}(i) ( f^-(X_i) + g(X_i,\xi_i ))  \le  \lim_{n \to \infty}  \frac{1}{n}  \sum_{i=1}^n \chi_{2}(i) ( f^-(X_i) + \varepsilon )   \\
    &=  \mathbb{E}[  \1_{I_2} (X) \1_{\tilde{I_2}} ( X^+) f^-(X) ]  + \varepsilon \mathbb{P} (X \in I_2 , X^+ \in \tilde{I_2} )\,.
  \end{align*}
  combining these two limit relations gives
  \begin{displaymath}
   \lim_{n \to \infty} \frac{1}{k_2(n)} \frac{n}{n} \sum_{i=1}^n \chi_{2}(i) h(X_i,\xi_i)
    \le  \frac{\mathbb{E}[  \1_{I_2} (X) \1_{\tilde{I_2}} ( X^+) f^-(X) ]}{\mathbb{P}(X \in I_2, X^+ \in \tilde{I_2} )}  + \varepsilon\,.
  \end{displaymath}
  The proof for \eqref{eq:sum1} is analogous, with the difference that the perturbation $g(x, \xi )$ is estimated from below by zero.
\end{proof}

The following corollary is a direct consequence of Proposition \ref{prop:algo} and it shows that  $D$ in (A3) is an estimate from below of the derivative of $f^-$ in the interval $I$.

\begin{corollary}\label{coro:algo}
  Under the conditions of Proposition~\ref{prop:algo}, almost surely, we have
  \begin{align*}
  &\lim_{n \to \infty} \frac{1}{\Delta} \left( \frac{1}{k_2(n)} \sum_{i=1}^n \chi_{2}(i) h(X_i,\xi_i)
    - \frac{1}{k_1(n)} \sum_{i=1}^n \chi_{1}(i) h(X_i,\xi_i)   \right)\\
   &\le  \max_{x \in I} (f^-)^\prime ( x )  + \frac{\varepsilon}{\Delta}\,.
  \end{align*}
\end{corollary}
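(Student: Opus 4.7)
The plan is to read off the claim as a direct consequence of Proposition~\ref{prop:algo}: each of the two right-hand sides there can be rewritten as a conditional expectation of $f^-(X)$ over one of the subintervals $I_j$, and the difference of these two conditional expectations is then controlled by a single application of the mean value theorem on the enclosing interval $I$.

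First, since both limits in \eqref{eq:sum1} and \eqref{eq:sum2} exist almost surely, I would pass the limit inside the difference, apply \eqref{eq:sum2} as an upper bound on the $I_2$ term and \eqref{eq:sum1} as a lower bound on the $I_1$ term. The resulting right-hand side has the form $\frac{1}{\Delta}(\bar f_2 - \bar f_1) + \frac{\varepsilon}{\Delta}$, where $\bar f_j := \mathbb{E}[f^-(X)\mid X\in I_j,\, X^+\in\tilde I_j]$ for $j=1,2$. Since the conditioning forces $X\in I_j$, each $\bar f_j$ lies between $\min_{x\in I_j} f^-(x)$ and $\max_{x\in I_j} f^-(x)$, so $\bar f_2 - \bar f_1\le \max_{x\in I_2} f^-(x) - \min_{x\in I_1} f^-(x)$.

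Second, I would choose maximizing/minimizing points $b^*\in I_2$ and $a^*\in I_1$ for $f^-$. Because $I_1$ lies strictly to the left of $I_2$ by (A1), one has $a^*<b^*$ and $[a^*,b^*]\subset I=[m_1,m_2+\delta]$. The mean value theorem then gives $f^-(b^*)-f^-(a^*)=(f^-)'(\eta)(b^*-a^*)$ for some $\eta\in I$, and since $b^*-a^*\le\Delta$ this is bounded above by $\max_{x\in I}(f^-)'(x)\cdot\Delta$. Dividing through by $\Delta$ and combining with the earlier $\varepsilon/\Delta$ term yields exactly the asserted bound.

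The main point of care (rather than a genuine obstacle) is the sign in the last step: passing from $\max_{x\in I}(f^-)'(x)\cdot(b^*-a^*)$ to $\max_{x\in I}(f^-)'(x)\cdot\Delta$ combines $\Delta\ge b^*-a^*$ with the condition $\max_{x\in I}(f^-)'(x)\ge 0$, which is the regime of interest throughout the paper, since the algorithm is deployed precisely when one approximates the (nonnegative) derivative of $f^-$ near the stable fixed point $e_1$ at the left boundary of the minimal invariant set. Every other step is direct bookkeeping on the quantities provided by Proposition~\ref{prop:algo}.
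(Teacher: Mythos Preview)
Your proof is correct and follows essentially the same route as the paper: bound each conditional expectation of $f^-(X)$ by the pointwise maximum (over $I_2$) and minimum (over $I_1$), apply the mean value theorem on $I$, and then replace $b^*-a^*$ by $\Delta$. The paper writes the intermediate step as $\max_{x\in I_1,\,y\in I_2}\tfrac{1}{\Delta}(f(y)-f(x))\le \max_{x\in I_1,\,y\in I_2}\tfrac{f(y)-f(x)}{y-x}$ and then invokes the mean value theorem, which is exactly your argument with the specific extremizers left implicit; notably, that inequality in the paper also silently uses the nonnegativity you flag in your final paragraph, so your version is if anything slightly more careful than the original.
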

\begin{proof}
Proposition \ref{prop:algo} yields
\begin{align*}
&\lim_{n \to \infty}  \frac{1}{\Delta} \left( \frac{1}{k_2(n)} \sum_{i=1}^n \chi_{2}(i) h(X_i,\xi_i )  - \frac{1}{k_1(n)} \sum_{i=1}^n \chi_{1}(i) h(X_i,\xi_i )  \right) \\
& \le \frac{1}{\Delta} \left( \frac{\mathbb{E}[  \1_{I_2} (X) \1_{\tilde{I_2}}(X^+)  f^-(X) ]}{\mathbb{P}(X \in I_2, X^+ \in \tilde{I_2} )}  -  \frac{\mathbb{E}[  \1_{I_1} (X) \1_{\tilde{I_1}}(X^+)  f^-(X) ]}{\mathbb{P}(X \in I_1, X^+ \in \tilde{I_1} )} \right)  + \frac{\varepsilon}{\Delta} \numberthis \label{expect1} \\
& \le  \max_{x \in I_1, y \in I_2 }   \frac{1 } { \Delta } \big( f ( y) - f (x ) \big)     + \frac{\varepsilon}{\Delta}
 \le \max_{x \in I_1, y \in I_2 }  \frac{ f (y) - f (x )}{y-x}   + \frac{\varepsilon}{\Delta}\\
& \le \max_{x \in I } (f^-)^\prime ( x )  + \frac{\varepsilon}{\Delta}\,.
\end{align*}
We approximated the expectation in \eqref{expect1} by considering the maximum. Next we used $\Delta  > y -x$ and the intermediate value theorem.
\end{proof}

\begin{remark}
  The error $\frac{\varepsilon}{\Delta}$ depends on the size of the intervals $I_1, I_2,\tilde{I}_1$ and $\tilde{I}_2$. In order to reduce this error, one could shrink the size of the intervals, but as a consequence, we will need to wait a longer time to obtain a sample of points in satisfying the conditions in (A2).
\end{remark}

\begin{remark}
  In Corollary~\ref{coro:algo}, the estimate of the derivative takes two types of errors into account, one coming from the noise, and the other given by the underestimation of the difference quotient $D$ (note that $\Delta$ serves as upper bound in the denominator). We want to compare the derivative of the extremal map $f^- ( x) = h(x,0)$ with the difference quotient introduced in (A3) for $k_1=k_2=1$. For any $x \in I_1$,  $y \in I_2$ and $\xi \in [0,1]$, we have
  \begin{align*}
  & \left| \max_{c \in [m_1, m_2+\delta] } \frac{\partial h}{ \partial x} (c, 0 ) -  \frac{h (y, \xi ) - h (x, \xi ) }{\Delta} \right| =  \\
  = & \left| \max_{c \in [m_1, m_2+\delta] } \frac{\partial h}{ \partial x} (c, 0 ) - \frac{y - x}{y-x} \frac{h (y, \xi ) - h (x, \xi ) }{\Delta} \right| \\
  = & \left| \max_{c \in [m_1, m_2+\delta] } \frac{\partial h}{\partial x} ( c, 0 ) - \frac{y - x}{\Delta}  \frac{\partial h}{\partial x} ( \bar{c}, \xi ) \right| =   \\
  = & \left| \max_{c \in [m_1, m_2+\delta] } \frac{\partial h}{\partial x} ( c, 0 ) \right| \left| 1  - \frac{y - x}{\Delta}  \frac{\frac{\partial h}{\partial x} ( \bar{c}, \xi )}{\max_{c \in [m_1, m_2+\delta] } \frac{\partial h}{\partial x} ( c, \xi )} \right|\,.
  \end{align*}
  Here, we used the intermediate value theorem to obtain $\bar{c} \in [m_1, m_2+\delta]$. Note that if the map $h$ is at least continuously differentiable, the term on the left hand side is bounded, and then the error can be controlled by the quantity
  \[
  \left| 1  - \frac{y - x}{\Delta}  \frac{\frac{\partial h}{\partial x} ( \bar{c}, \xi )}{\max_{c \in [m_1, m_2+\delta] } \frac{\partial h}{\partial x} ( c, \xi )} \right|.
  \]
  The error depends on the size of the intervals, since $\frac{y - x}{\Delta}$ appears and on the variation of the function $h$ as the ratio of the derivatives appears. We want to emphasize that the shorter the intervals $I_1, I_2, \tilde{I}_1,\tilde{I}_2$ are, the smaller is the error is. On the other hand, from the application point of view, the probability of having consecutive points in $I_i$ and $\tilde{I}_1$ is also lower, since  $\mathbb{P}(X \in I_i, X^+ \in \tilde{I_i})$ for $i\in\{1,2\}$, tends to zero as the size of the intervals tend to zero. When applying Algorithm~\ref{alg} to a time series, it is then crucial to consider the size of the intervals in order to have a suitable sample, and the variation of the extremal function in the intervals, in order to have a meaningful estimate.
\end{remark}

\begin{remark}
  In the case of additive noise, Algorithm~\ref{alg} can be simplified significantly. In (A2), the intervals $\tilde{I_1}$ and $\tilde{I_2}$, and the condition that the next iteration must be contained in $\tilde{I_1}$ and $\tilde{I_2}$ can be dropped. In order to show this we state and sketch the proof of Proposition \ref{prop:algo} and Corollary \ref{coro:algo} in the case of additive noise.
  Consider the stationary process $\{X_i\}_{i\in\N_0}$ as previously defined, and let $\xi_i$ be a sequence of i.i.d.~bounded random variables distributed according to a random variable $\xi$ such that $X_i$ is independent of $\xi_i$ for every $i \in \mathbb{N}$. In the additive noise case, the map $h$ reads as
  \[
  h(X_i, \xi_i) =f(X_i) + \xi_i\,.
  \]
  Define for $i \in \mathbb{N}$ the random variables
  \[
  \Gamma_1(i) := \1_{I_1}(X_i) \quad \mbox{and}\quad \Gamma_2(i) := \1_{I_2} (X_i)\,.
  \]
  Then the statements \eqref{eq:sum1} and \eqref{eq:sum2} of Proposition \ref{prop:algo} read as
  \begin{equation}
  \label{eq:sumadd1}
  \lim_{n \to \infty} \frac{1}{k_1(n)} \sum_{i = 1}^n \Gamma_1 (i) (f(X_i) + \xi_i)   = \frac{\mathbb{E} [\1_{I_1}(X) f(X)  ]}{\mathbb{P}(X \in I_1)} + \mathbb{E}[\xi]
  \end{equation}
  and
  \begin{equation}
  \label{eq:sumadd2}
  \lim_{n \to \infty} \frac{1}{k_2(n)} \sum_{i = 1}^n \Gamma_2(i) (f(X_i) + \xi_i)   = \frac{\mathbb{E} [\1_{I_2}(X) f(X)  ]}{\mathbb{P}(X \in I_2)}+ \mathbb{E}[\xi]\,,
  \end{equation}
  where $k_1,k_2$ are defined in \eqref{eq:kn}. The calculation of the limits \eqref{eq:sumadd1} and \eqref{eq:sumadd2} is analogous to the calculation performed in the proof of Proposition~\ref{prop:algo}. When compared to \eqref{eq:sum1} and \eqref{eq:sum2}, note that in the right hand side of the equalities \eqref{eq:sumadd1} and  \eqref{eq:sumadd2}, a different contribution of the noise appears. For  \eqref{eq:sumadd1}, we have
  \[
  \lim_{n \to \infty} \frac{1}{k_1(n)} \sum_{i = 1}^n \Gamma_1 (i)  \xi_i  = \frac{1}{\mathbb{P} (X \in I_1)} \mathbb{E}[\xi] \mathbb{P}(X \in I_1) = \mathbb{E}[\xi]\,,
  \]
  and similarly for \eqref{eq:sumadd2}, we have
  \[
  \lim_{n \to \infty} \frac{1}{k_2(n)} \sum_{i = 1}^n \Gamma_2 (i)  \xi_i  = \mathbb{E}[\xi]\,.
  \]
  We do not need an estimate of the limits, since the effect of the noise will cancel out. On the other hand, the additive version of Corollary \ref{coro:algo} reads as
   \begin{align*}
    &\lim_{n \to \infty} \frac{1}{\Delta} \left( \frac{1}{k_2(n)} \sum_{i=1}^n \Gamma_{2}(i) (f(X_i) + \xi_i)
      - \frac{1}{k_1(n)} \sum_{i=1}^n \Gamma_{1}(i) (f(X_i) + \xi_i)  \right)\\
     &\le  \max_{x \in I} (f^-)^\prime ( x )\,.
    \end{align*}
  In fact, one has
  \begin{align*}
  &\lim_{n \to \infty}  \frac{1}{\Delta} \left( \frac{1}{k_2(n)} \sum_{i=1}^n \Gamma_{2}(i) (f(X_i) + \xi_i)   - \frac{1}{k_1(n)} \sum_{i=1}^n \Gamma_{1}(i) (f(X_i) +  \xi_i)   \right) \\
  & \le \frac{1}{\Delta} \left[ \frac{\mathbb{E} [\1_{I_2}(X) h(X)   ]}{\mathbb{P}(X \in I_2)} + \mathbb{E}[\xi]  - \frac{\mathbb{E} [\1_{I_1}(X) h(X)  ]}{\mathbb{P}(X \in I_1)} -  \mathbb{E}[\xi]  \right]   \numberthis \label{expectadd} \\
  & \le  \max_{x \in I_1, y \in I_2 }   \frac{1 } { \Delta } \big( f ( y) - f (x ) \big)     \le \max_{x \in I_1, y \in I_2 }  \frac{ f (y) - f (x )}{y-x}    \\
  & \le \max_{x \in I } (f^-)^\prime ( x )\,.
  \end{align*}
  The proof is the same as in Corollary \ref{coro:algo}, except for step \eqref{expectadd}, where the contribution of the noise cancels out. Hence we have that the intervals  $\tilde{I_1}$ and $\tilde{I_2}$ and the condition on the next iterated in (A2) can be dropped.

  The absence of the intervals  $\tilde{I_1}$ and $\tilde{I_2}$ implies that the algorithm is more effective in the additive case for two reasons. Firstly, there are less conditions on the points of the time series, implying that one has generally a greater number of points eligible for the calculating the estimate when compared to the general case. Secondly, we obtain a better estimate from below of the derivative, since the term $\frac{\varepsilon}{\Delta}$ does not appear in the additive case.
\end{remark}

\section{Early-warning signals for bifurcations in the Koper model}
\label{sec:koper}

In this section, we numerically study the stochastic return map of the Koper model
with additive bounded noise. The Koper model is a prototypical example of a system
exhibiting mixed-mode oscillations (MMOs), i.e.~periodic orbits with widely varying amplitudes in one period.
Depending upon the value of a parameter $\lambda \in \mathbb{R}$
the system exhibits different stable modes of oscillations. Bifurcations of the
minimal invariant set of the set-valued mapping associated to the stochastic
return map correspond to the system shifting from an oscillation mode to another.
With the algorithm of Section~\ref{sec:algorithm} we reconstruct the derivative
of the extremal map $f^-_\lambda$ for different values of $\lambda$ to find an
early-warning sign for upcoming bifurcations.

We consider the Koper model given by the ordinary differential equation
\begin{align*}
\varepsilon \frac{\txtd x}{\txtd \tau}&  = y-x^3+3x=:f(x,y,z), \\
\frac{\txtd y}{\txtd \tau}& = kx - 2 (y + \lambda) + z, \label{KoperODE} \numberthis \\
\frac{\txtd z}{\txtd \tau}& = \lambda + y - z,
\end{align*}
where $k,\lambda \in \mathbb{R}$ are real parameters and $\varepsilon$ is assumed to
be small and positive. We fix $k = -10$
and study the system for a special parameter range $\lambda \in I =  (-8, -23/6 )$.
This corresponds to a typical transition through different MMOs as discussed in
\cite{KuehnRetMaps,Desrochesetal}. Observe that \eqref{KoperODE} is a fast-slow
system due to the small parameter $\varepsilon$ with one fast variable $x$, and two
slow variables $y,z$. Therefore, the analysis of MMOs can be carried out using
the framework of multiple time scale dynamical systems \cite{Desrochesetal,KuehnBook}.
We provide a brief description of the multiscale geometry of the system. The critical
manifold is given by
\[
C_0 := \{ (x,y,z) \in \mathbb{R}^3 | y = x^3 - 3x  \}.
\]
Note that $C_0$ can be viewed as the algebraic constraint for the slow subsystem
obtained from \eqref{KoperODE} in the limit $\varepsilon\ra 0$. Furthermore, we can
also re-scale time $t:=\tau/\varepsilon$ and on the time scale $t$, the critical manifold
consists of equilibrium point for the fast subsystem
\begin{align*}
\frac{\txtd x}{\txtd t}&  = y-x^3+3x, \\
\frac{\txtd y}{\txtd t}& = 0, \label{KoperODEfs} \numberthis \\
\frac{\txtd z}{\txtd t}& = 0,
\end{align*}
obtained again by a formal singular limit $\varepsilon \ra 0$. $C_0$ is normally
hyperbolic (which just means here that $(\partial_x f)|_{C_0}$ is nonzero) away from the
two fold lines
\[
F^+ := \{x = 1  \}\cap C_0 \qquad \text{and} \qquad F^- := \{x =- 1  \}\cap C_0.
\]
The folds $F^\pm$ separate the normally hyperbolic repelling manifold
$C^r_0 = C_0 \cap \{ x<1 \} \cap \{  x>-1 \}$ from the two normally hyperbolic
attracting manifolds $C^{a+}_0 = C_0 \cap \{ x > 1\} $ $C^{a-}_0 = C_0 \cap \{ x <-1\} $.
In particular, $C_0^r$ consists of repelling equilibria of \eqref{KoperODEfs} while
$C_0^{a\pm}$ consist of attracting equilibria. There are two folded singularities
for the system located at
\[
q_\pm = (\pm 1 , \pm 2 , 2 \lambda \pm  6),
\]
which can be viewed as equilibria of a suitable desingularized slow subsystem; see
{e.g.}~\cite{Desrochesetal,KuehnBook,Wechselberger2} for more details. The main point
for us here is that it is well-understood how the interplay between folded singularities
and a global fast-slow return mechanism can generate MMOs as shown in Figure \ref{figure:modeofoscillation}. Due to
the system symmetry
\[
(x,y,z,\lambda, k ) \to (-x , -y, -z , - \lambda , k),
\]
we can consider $q_+$ only. It can be shown that for $\lambda \in I =  (-8, -23/6 )$
the folded singularity $q_+$ is a so-called \emph{folded node} \cite{KuehnRetMaps}.
It is known \cite{BronsKrupaWechselberger,SzmolyanWechselberger1} that in the vicinity
of a folded node various small-amplitude oscillations (SAOs) can be generated. The idea
is that a set of special canard orbits partitions the neighborhood of $q_+$ in the
$(x,z)$ plane into certain sectors of rotation. An orbit of system \eqref{KoperODE}
entering a sector will perform $s$ small oscillations before making a transition to a fast
segment. To each sector corresponds a different number $s$ of oscillations.

Due to the geometry of the $S$-shaped critical manifold and its attractivity properties
it follows that we also expect large-amplitude oscillations (LAOs) in the system due to
a relaxation-oscillation type switching between $\cO(\varepsilon)$-neighbourhoods of the
two attracting branches $C^{a\pm}_0$ (see also Figure~\ref{figure:modeofoscillation}).
The combination of the SAO and the LAO mechanisms can then yield MMOs.

It is possible to define a return map $P_\lambda$ on a suitable codimension-one section
$\Sigma$ to study the MMOs via the return map
\[
P_\lambda : \Sigma \to \Sigma.
\]
Here it will be convenient to fix $\Sigma$ as
\[
\Sigma = \left\{ (x,y,z) \in \mathbb{R}^3 | x = - \frac{4}{5} , y \in  (-3 , -1), z \in (z_0,z_1) \right\}.
\]
The points $z_0,z_1$ will be chosen according to $\lambda$ and the noise amplitude. Due to
the fast-slow structure, the map $P_\lambda$ is almost one-dimensional \cite{Guckenheimer8}
and it is common to project the map $P_\lambda$ further to reduce it to a one-dimensional map
\[
p_\lambda : \Sigma \cap \{ y = y_0 \}  \to \mathbb{R},
\]
where we define the map as
\[
z \mapsto p_\lambda (z) := (\pi_z \circ P_\lambda)(y_0, z)
\]
with $\pi_z$ denoting the projection onto the $z$ axis. The projection is made in order to
simplify the analysis even if the resulting $p_\lambda$ is non-invertible. Sometimes it is
even possible to calculate an explicit (asymptotic) expression for the nearly one-dimensional
map in certain parameter regimes as explained for a slightly different, but related, three-time
scale model in \cite{KrupaPopovicKopell}.

\begin{figure}[htbp]
\begin{minipage}{0.5\textwidth}
   \begin{overpic}[width=\textwidth,unit=1mm]{./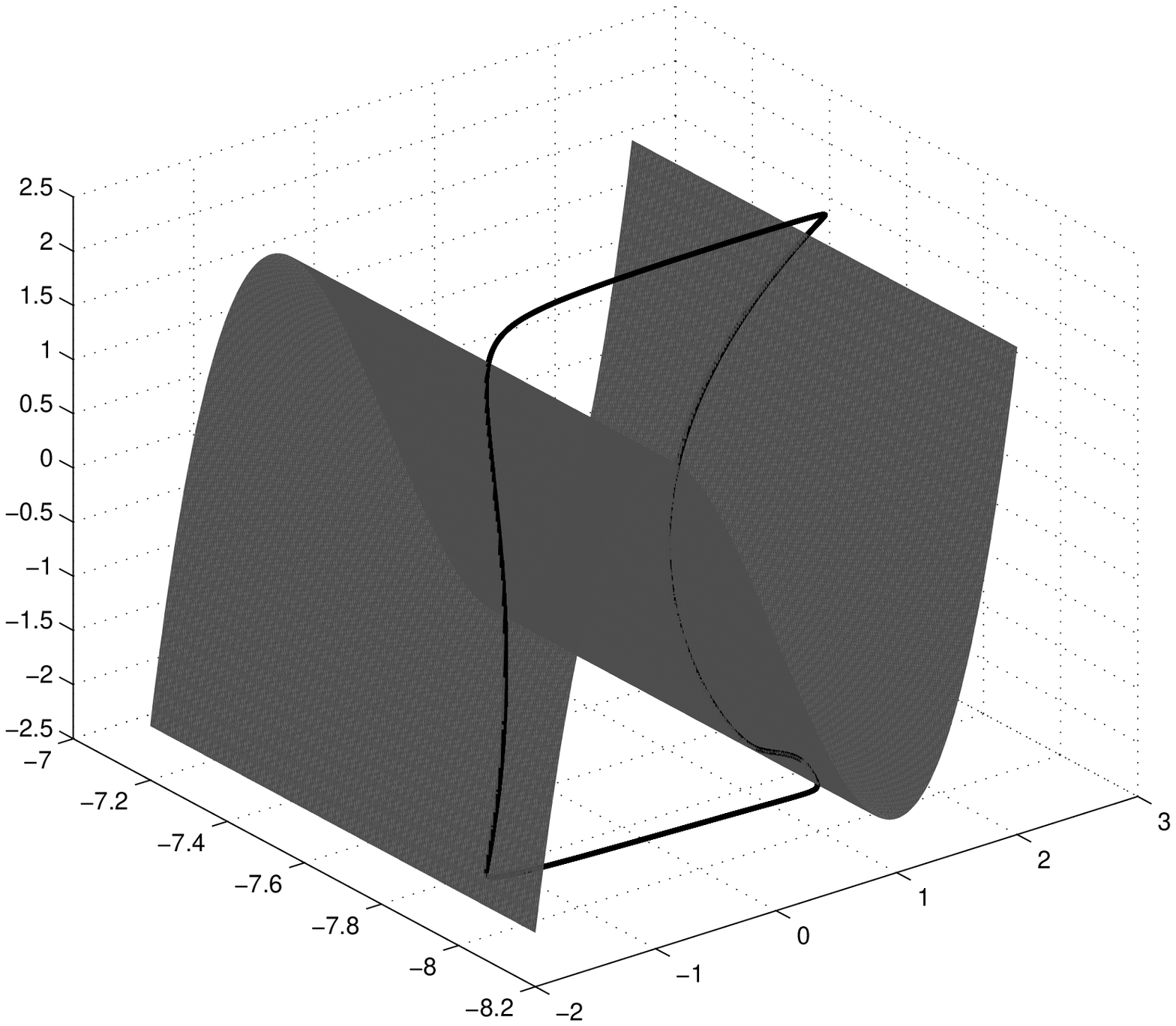}
\put(28,4){{\parbox{0.4\linewidth}{%
     \[
       x
     \]}}}
\put(5,5){{\parbox{0.4\linewidth}{%
     \[
      z
     \]}}}
\put(-10,29){{\parbox{0.4\linewidth}{%
     \[
        y
     \]}}}
\end{overpic}
\end{minipage}
\begin{minipage}{0.5\textwidth}
   \begin{overpic}[width=\textwidth,unit=1mm]{./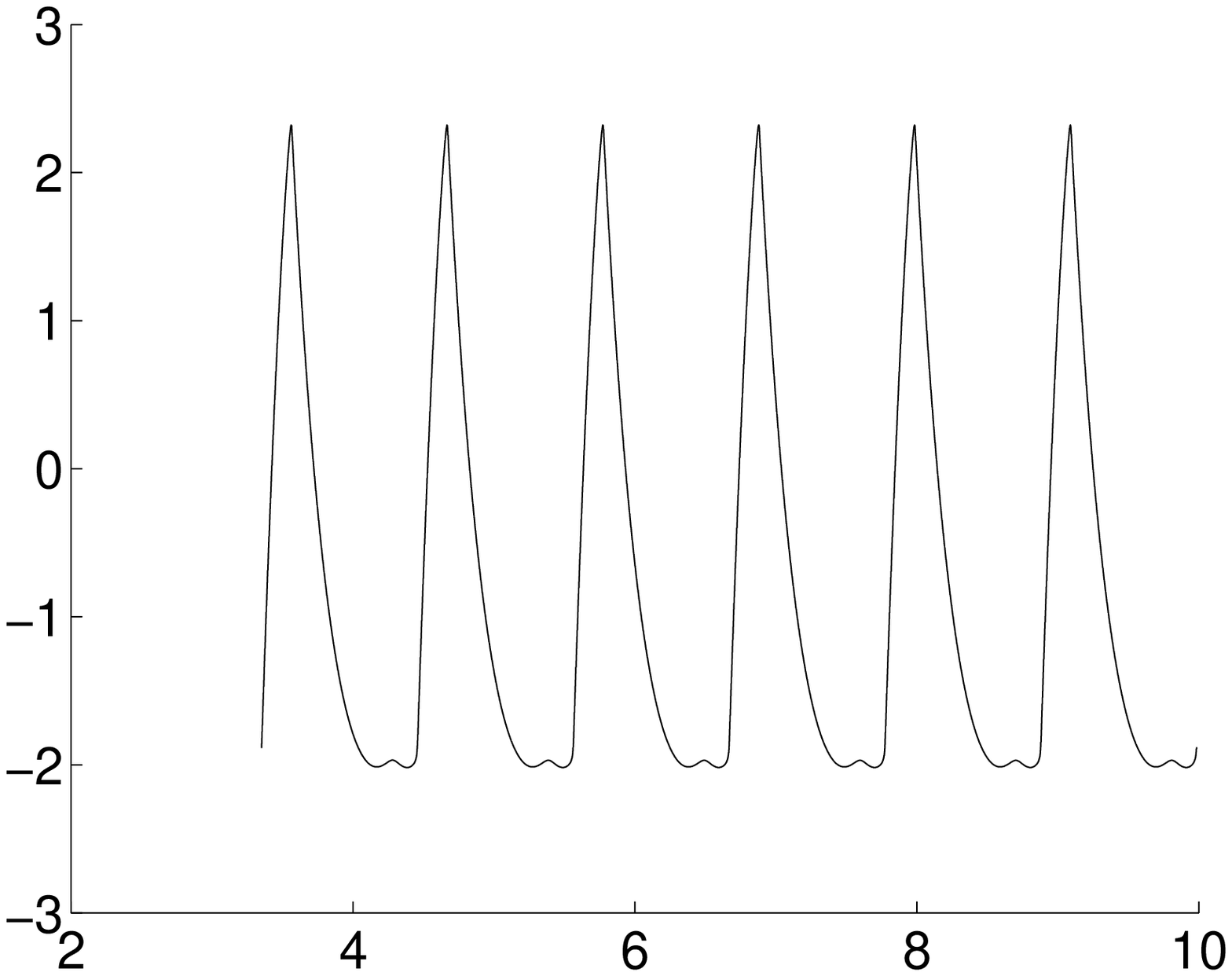}
\put(20,0){{\parbox{0.4\linewidth}{%
     \[
       t
     \]}}}
\put(-8,23){{\parbox{0.4\linewidth}{%
     \[
        y
     \]}}}
\end{overpic}
\end{minipage}
\caption{\label{figure:modeofoscillation}A trajectory of a solution of
system \eqref{KoperODE} and its projection on the $y$ component, corresponding to the initial condition $(x,y,z ) =
( -0.81,-2,-8)$ and $\lambda = -6.9$.}
\end{figure}
\begin{figure}[htbp]
\centering
   \begin{overpic}[width=0.75\textwidth,unit=1mm]{./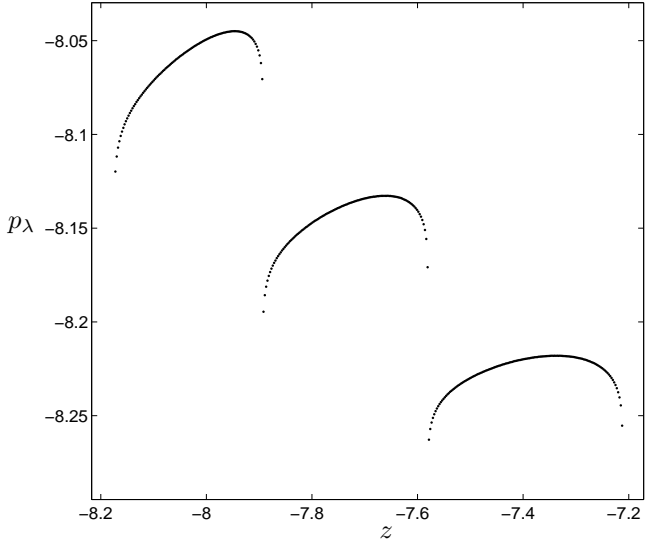}
\put(26,4){{\parbox{0.4\linewidth}{%
     \[
       z
     \]}}}
\put(-22,45){{\parbox{0.4\linewidth}{%
     \[
        p_\lambda
     \]}}}
\end{overpic}
\caption{\label{deterministickoper} Deterministic return map for the Koper system $\lambda = -6.9$. The jumps correspond to the location of the
canard orbits. In fact, the map is continuous and the gaps represent steep
jumps \cite{Guckenheimer8,Desrochesetal}.}
\end{figure}

The geometry of the sectors of rotation for the SAOs is reconstructed very well by the
graph of the return map. Each sector is bounded by two consecutive apparent discontinuities in the
graph of the mapping $ z \mapsto p_\lambda(z)$. The jumps correspond to the location of
canard orbits; see Figure \ref{deterministickoper}.
An attracting fixed point of the return map $p_\lambda$ corresponds to a stable mode
of oscillations. It is key that by varying $\lambda$ the fixed point of the return map
can change and the system can transition to a different stable mode of oscillation. The
goal is to introduce (bounded) noise in the system of ordinary differential equations to predict the shift to
a new mode of oscillations when $\lambda$ is slowly increased towards a bifurcation.
Consider again the system \eqref{KoperODE}.

We introduce bounded additive noise in the
model and consider the family of random maps
\begin{equation}
\label{koperRDE}
\left( \begin{matrix}  x(t) \\ y(t) \\ z(t)  \end{matrix} \right)
=  \left( \begin{matrix}  x(s) \\ y(s) \\ z(s)  \end{matrix} \right)+\left( \begin{matrix} y(s)-x(s)^3+3x(s)  \\  \varepsilon\big(kx(s) - 2 (y(s) + \lambda) + z(s)\big) \quad
 \\  \varepsilon\big(\lambda + y(s) - z(s)\big) \end{matrix} \right) (t-s)+ A (\xi(t)-\xi(s))\,,
\end{equation}
where $\xi = (\xi_1(t),\xi_2(t),\xi_3(t))^\top \in [-1,1]^3$ is a three-dimensional
bounded stochastic process with independent increments, $t>s$, and we fix the time step $(t-s)$.
Each increment $\xi(t)-\xi(s)$ is uniformly distributed in $[-1,1]$. We consider the
case where
\[
A = \sigma \left( \begin{matrix} 1.0 & 0.5 & 0.2 \\ 0.5 & 1.0 & 0.3
\\ 0.2 & 0.3 & 1.0  \end{matrix} \right).
\]
Note that in the limit $(t-s)\ra 0$ we obtain unbounded noise by the central limit
theorem so we cannot directly generalize to bounded-noise random ordinary differential
equations but stay on the level of maps to guarantee bounded noise.

The return map for \eqref{koperRDE} is a \emph{stochastic return map},
$p_\lambda^{st}$ defined on the same section $\Sigma$ of the map $p_\lambda$. The
definition of a stochastic return map is more problematic, since the noise
can destroy the return mechanism or we could have an early return to the section, i.e.~before the trajectory goes through the global return mechanism. A formal construction
is presented in \cite{WeissKnobloch}. In the numerical reconstruction performed in
this paper $\Sigma$ and the initial conditions are chosen in such a way that a realization of the orbit of
the random differential equation does make a global return to $\Sigma$. In practice we consider initial conditions $ (x,y,z)$ with $x< -\frac{3}{4}$, $y< 0$, $z \in (z_0, z_1)$ and we stop the iteration once the orbit $(x(t),y(t),z(t) ) $ is such that $y(t) < 0$ and $x(t)$ crosses $x =   -\frac{3}{4}$.

\begin{figure}[htbp]
  \begin{minipage}{0.5\textwidth}
 \begin{overpic}[width=\textwidth,unit=1mm]{./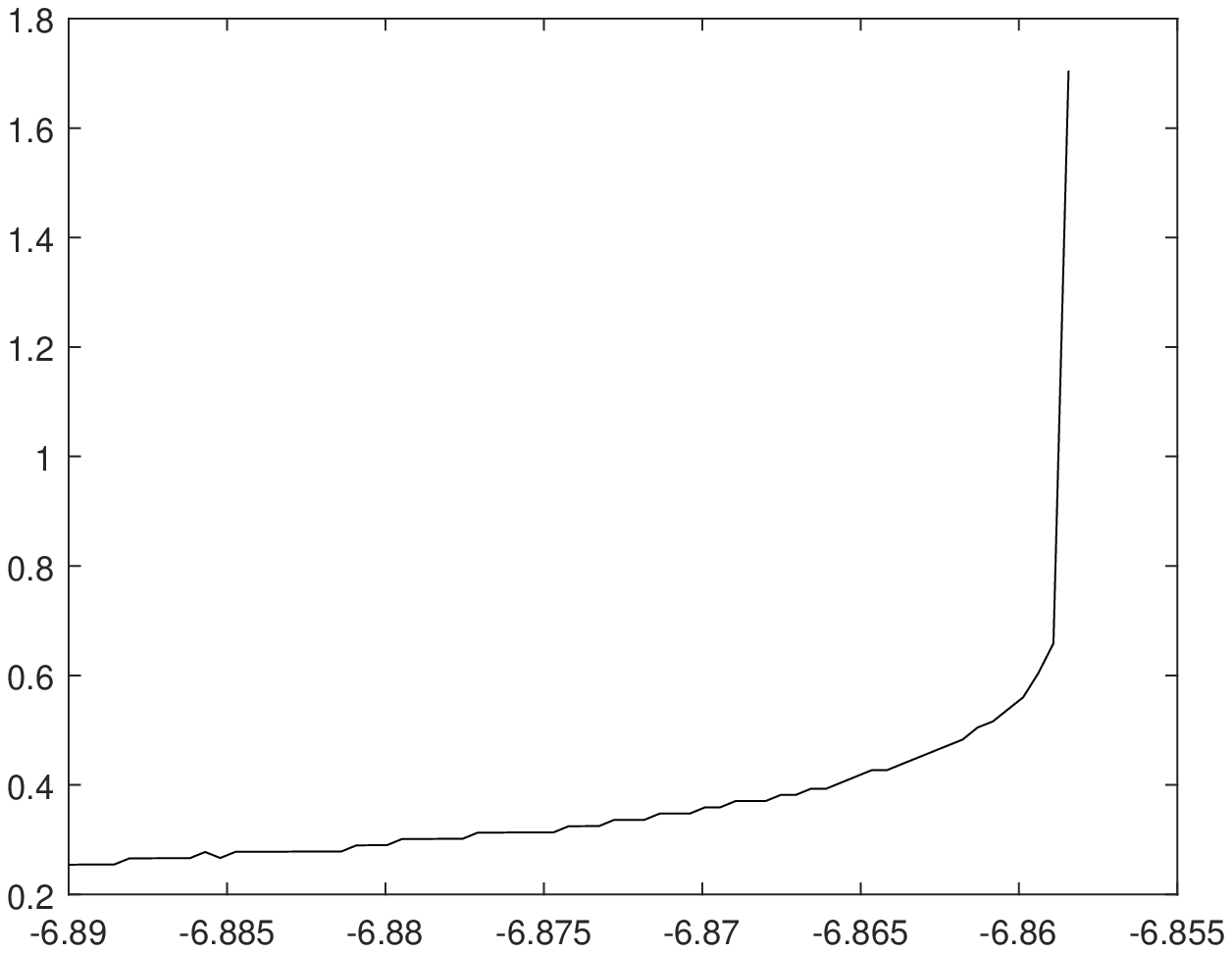}
\put(20,0){{\parbox{0.4\linewidth}{%
     \[
       \lambda
     \]}}}
\put(-11,23){{\parbox{0.4\linewidth}{%
     \[
        D p_{\lambda}
     \]}}}
\end{overpic}
  \end{minipage}
  \begin{minipage}{0.5\textwidth}

 \begin{overpic}[width=\textwidth,unit=1mm]{./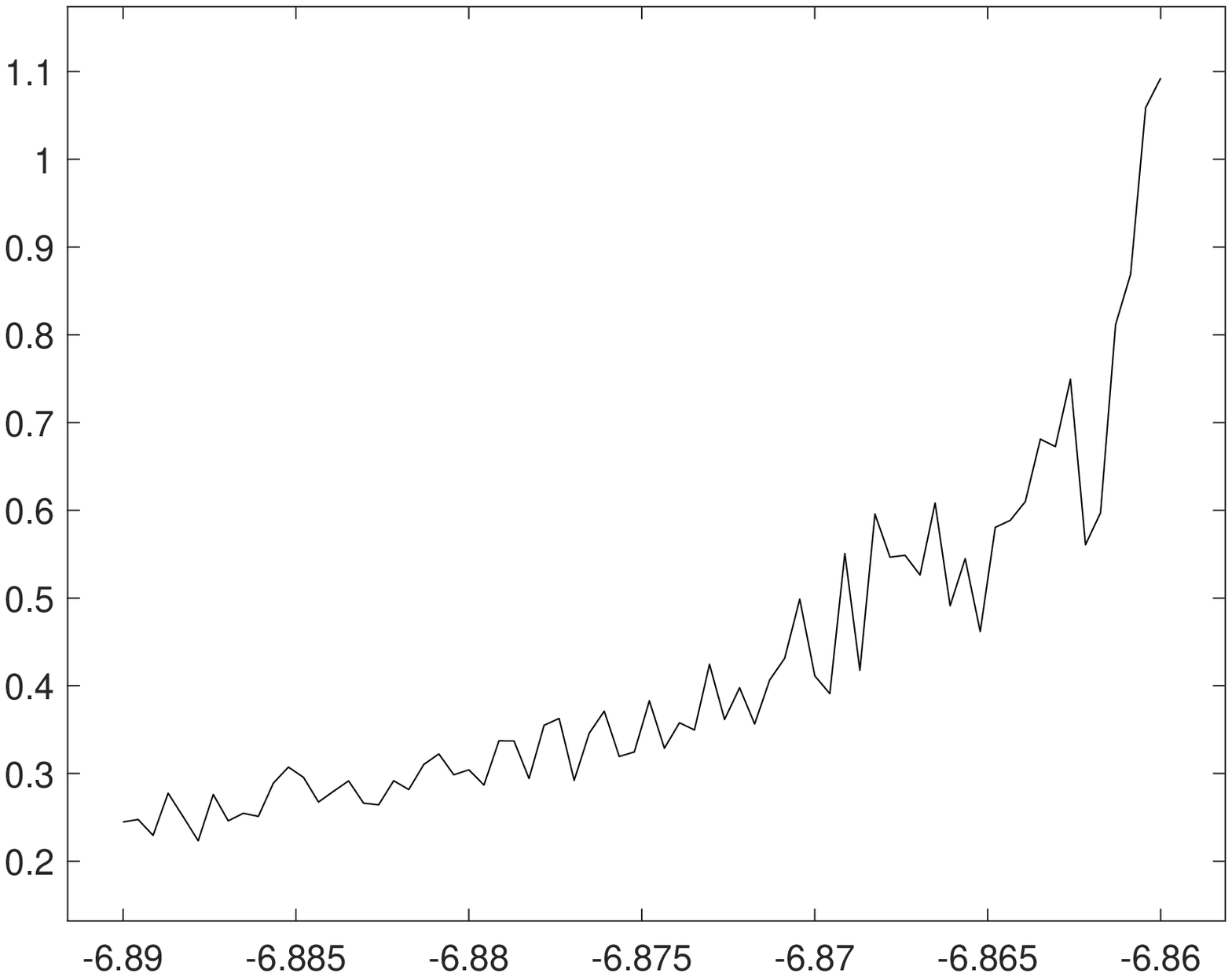}
\put(20,0){{\parbox{0.4\linewidth}{%
     \[
       \lambda
     \]}}}
\put(-11,23){{\parbox{0.4\linewidth}{%
     \[
       D f^-
     \]}}}
\end{overpic}
\end{minipage}
\caption{ \label{figure:koperderivative}(a) Derivative of the deterministic return map $p_\lambda$ evaluated at the fixed point $x_\lambda$ for $\lambda \le \lambda_0 = -6.86$. For $\lambda > -6.86$ the derivative is evaluated in $x_{\lambda_0}$ as the fixed point disappears and the map has a periodic orbit. (b) Derivative of $f^-$ at the lower boundary
of the minimal invariant set. For any given $\lambda$ the lower bound of the minimal
invariant set is retrieved from Figure \ref{figure:minimalinvariant}. We considered
$n=3000$ realizations of the stochastic return map, say $ z_i =  p_\lambda^{st}
(\xi_i,m_\lambda)$, $w_i=p_\lambda^{st} (\xi_i m_\lambda + \varepsilon)$, for
$i = 1,\dots,n$ and approximated the derivative by
$d_\lambda=\frac{\min_i \{ w_i \} -  \min_i \{ z_i \} }{\varepsilon}$. We considered
$\varepsilon = 0.01$.}
\end{figure}

In Figure~\ref{figure:koperlambdastochastic}, the reconstruction of the stochastic
return map is presented for different values of $\lambda$ and a fixed noise amplitude
$\sigma = 0.01$. Notice that at the boundary between two sectors a trajectory of the
random differential equation is more likely to have two possible oscillations modes according to the noise
realization. This is a consequence of the fact that also the stochastic map is a
projection onto the $z$ axis and the noise affecting the other components may also
cause a jump to a different sector. The graph of the stochastic return map approximates
the graph of the set-valued map. From the reconstruction we find that there exists an
isolated minimal invariant set $E_\lambda$ for $\lambda_0^*< \lambda < \lambda_0 $ for some
$\lambda_0^*$. In $\lambda_0\approx -6.86$, we observe that the set  $E_{\lambda_0}$ satisfies the necessary conditions for having a bifurcation. Since it does not disappear for $\lambda > \lambda_0$ the set $E_{\lambda_0}$ blows up lower semi-continuously in the sense presented in \cite{LambRasmussenRodrigues}. The bifurcation can be observed in the reconstruction of the
minimal invariant set in Figure~\ref{figure:minimalinvariant}.
\begin{figure}[htbp]
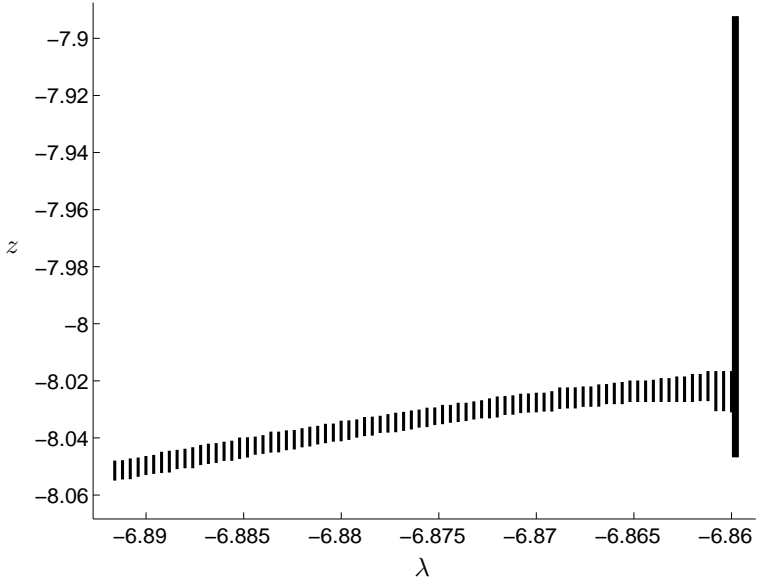

\centering
\begin{overpic}[width=0.9\textwidth,unit=1mm]{./figures/koperminimalinvariant}
\put(33,2){{\parbox{0.4\linewidth}{%
     \[
       \lambda
     \]}}}
\put(-21,45){{\parbox{0.4\linewidth}{%
     \[
        z
     \]}}}
\end{overpic}
\caption{\label{figure:minimalinvariant}Minimal invariant set of the stochastic return
map reconstructed for $\lambda \in [-6.9,-6.859]$. The bifurcation is observed in
$\lambda \approx -6.86$. }
\end{figure}
We evaluated the derivative of the extremal map $f^-$ associated to the stochastic return
map $p_\lambda^{st}$ with the algorithm presented in Section \ref{sec:algorithm}. The
methodology and the results are presented in Figure \ref{figure:koperderivative}. The
derivative of $f^-_\lambda$ evaluated at the boundary of $E_\lambda$ crosses $1$ from
below as $\lambda $ crosses $\lambda_0$. This can be used as early-warning signal due to
Proposition~\ref{prop:sufficient}.
\begin{figure}[ht!bp]
\begin{minipage}{0.5\textwidth}
\begin{overpic}[width=\textwidth,unit=1mm]{./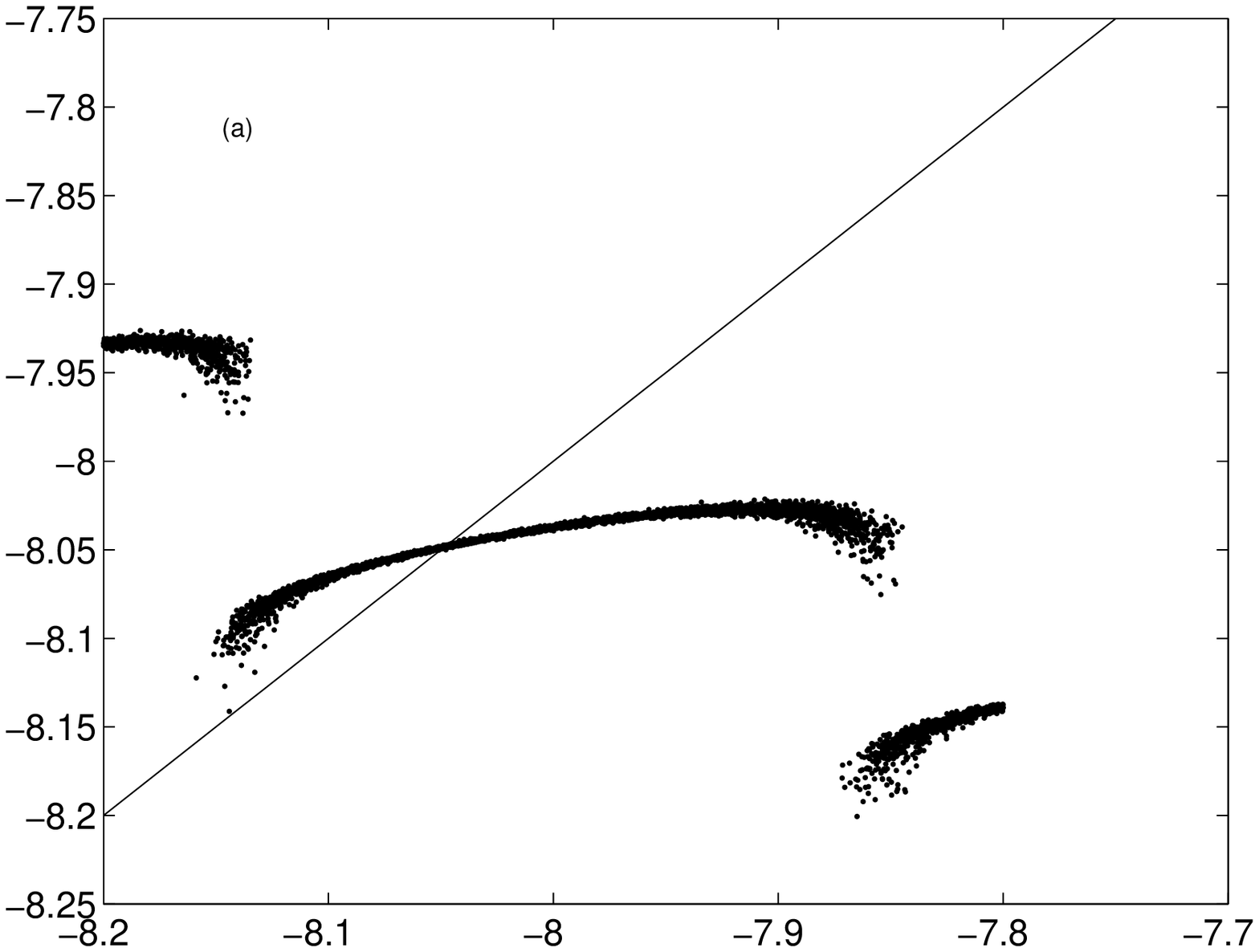}
\put(20,0){{\parbox{0.4\linewidth}{%
     \[
       z
     \]}}}
\put(-11,23){{\parbox{0.4\linewidth}{%
     \[
        p_\lambda
     \]}}}
\end{overpic}
\end{minipage}
\begin{minipage}{ 0.5\textwidth}
\begin{overpic}[width=\textwidth,unit=1mm]{./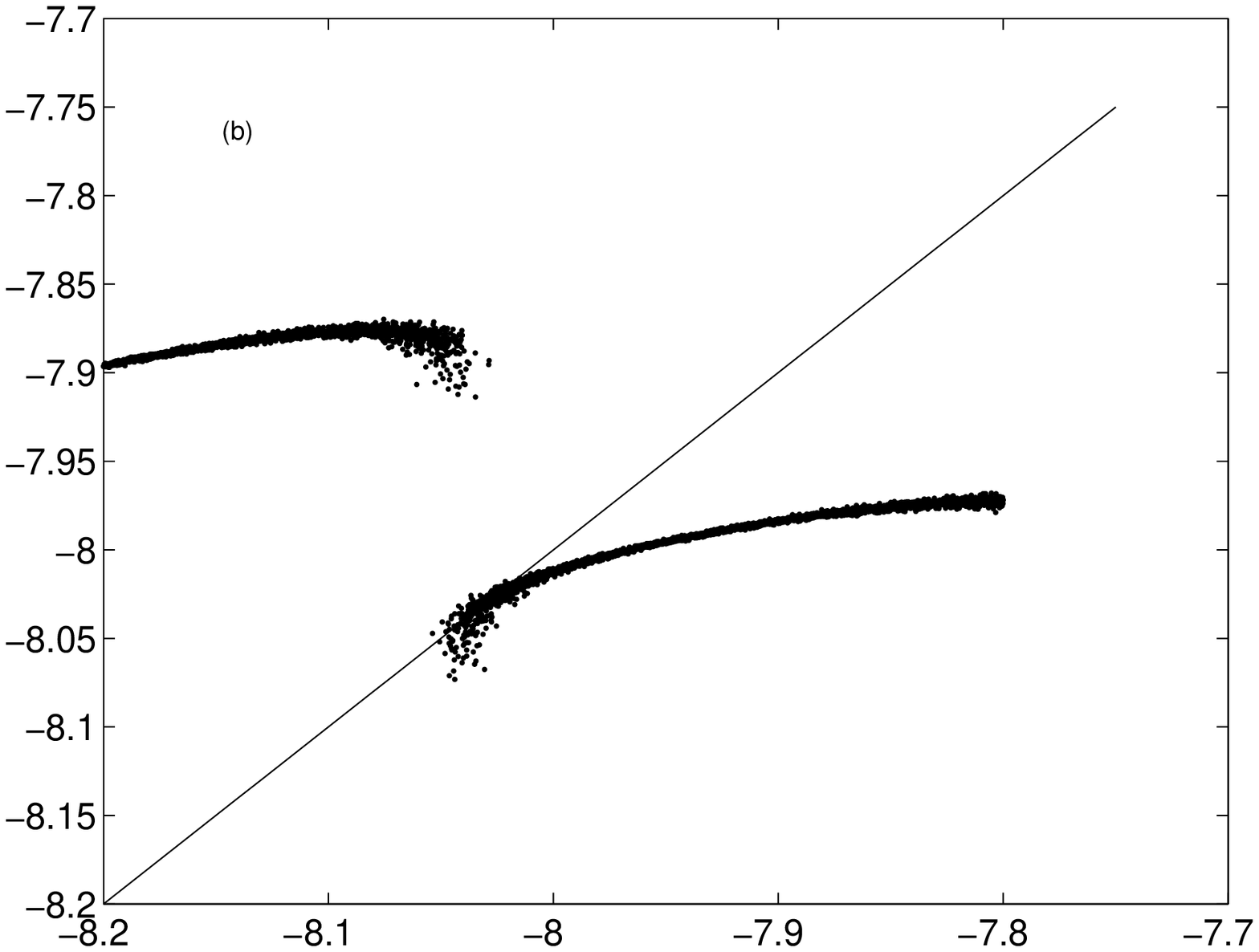}
\put(20,0){{\parbox{0.4\linewidth}{%
     \[
       z
     \]}}}
\put(-11,23){{\parbox{0.4\linewidth}{%
     \[
        p_\lambda
     \]}}}
\end{overpic}
\end{minipage}
\caption{\label{figure:koperlambdastochastic} (a) $\lambda = -6.9$.
Approximation of the set-valued dynamical system associated to the stochastic
return map before bifurcation. The minimal invariant set is easily identified by the
intersection of the identity line and the map. (b) $\lambda = -6.85$
Approximation of the set-valued dynamical system associated to the stochastic
return map after bifurcation. Notice that the system now can randomly jump
between two different modes of oscillations.}
\end{figure}


\newcommand{\etalchar}[1]{$^{#1}$}
\providecommand{\bysame}{\leavevmode\hbox to3em{\hrulefill}\thinspace}
\providecommand{\MR}{\relax\ifhmode\unskip\space\fi MR }
\providecommand{\MRhref}[2]{%
  \href{http://www.ams.org/mathscinet-getitem?mr=#1}{#2}
}
\providecommand{\href}[2]{#2}

\end{document}